\documentclass[11pt]{article}

\usepackage[letterpaper, hmargin=1.3in, top=1in, bottom=1.3in, footskip=1in]{geometry}
\linespread{1.5}
\usepackage{float}

\usepackage{titlesec}
\titleformat{\subsection}[runin]{\normalfont\bfseries}{\thesubsection.}{.5em}{}[.]\titlespacing{\subsection}{0pt}{2ex plus .1ex minus .2ex}{.8em}
\titleformat{\subsubsection}[runin]{\normalfont\itshape}{\thesubsubsection.}{.3em}{}[.]\titlespacing{\subsubsection}{0pt}{1ex plus .1ex minus .2ex}{.5em}

\usepackage[labelfont=sc,font=small,labelsep=period]{caption}
\setlength{\intextsep}{3em}
\setlength{\textfloatsep}{3em}



\usepackage{amsmath} 
\usepackage{amssymb}
\usepackage{amsfonts}
\usepackage{latexsym}
\usepackage{amsthm}
\usepackage{amsxtra}
\usepackage{amscd}
\usepackage{bbm}
\usepackage{mathrsfs}
\usepackage{bm}
\usepackage{xcolor}
\usepackage{subcaption}
\usepackage[toc]{appendix}
\usepackage[utf8]{inputenc}


\usepackage{graphicx, color}

\definecolor{darkred}{rgb}{0.9,0,0.3}
\definecolor{darkblue}{rgb}{0,0.3,0.9}


\usepackage{ifthen}
\def\comment#1{\ifthenelse{\isodd{\value{page}}}{\marginpar{\raggedright\scriptsize{\textcolor{darkred}{#1}}}}{\marginpar{\raggedleft\scriptsize{\textcolor{darkred}{#1}}}}}  



\usepackage[nottoc,notlof,notlot]{tocbibind}
\usepackage{cite} 



\flushbottom
\numberwithin{equation}{section}
\numberwithin{figure}{section}



\theoremstyle{plain} 
\newtheorem{theorem}{Theorem}[section]
\newtheorem*{theorem*}{Theorem}
\newtheorem{lemma}[theorem]{Lemma}
\newtheorem*{lemma*}{Lemma}
\newtheorem{corollary}[theorem]{Corollary}
\newtheorem*{corollary*}{Corollary}
\newtheorem{proposition}[theorem]{Proposition}
\newtheorem*{proposition*}{Proposition}
\newtheorem{definition}[theorem]{Definition}
\newtheorem*{definition*}{Definition}

\newtheorem*{conjecture*}{Conjecture}

\theoremstyle{definition} 

\newtheorem*{example*}{Example}
\newtheorem{remark}[theorem]{Remark}
\newtheorem*{remark*}{Remark}







\renewcommand{\leq}{\leqslant}
\renewcommand{\geq}{\geqslant}
\renewcommand{\epsilon}{\varepsilon}




\title{Quasi-Sure Stochastic Analysis through Aggregation and SLE$_\kappa$ Theory}
\author{Vlad Margarint}

\begin{document}

\maketitle

\begin{abstract}
We study SLE$_{\kappa}$ theory with elements of Quasi-Sure Stochastic Analysis through Aggregation. Specifically, we show how the latter can be used to construct the SLE$_{\kappa}$ traces quasi-surely (i.e. simultaneously for a family of probability measures with certain properties) for $\kappa \in \mathcal{K}\cap \mathbb{R}_+ \setminus ([0, \epsilon) \cup \{8\})$, for any $\epsilon>0$ with $\mathcal{K} \subset \mathbb{R}_{+}$ a nontrivial compact interval, i.e. for all $\kappa$ that are not in a neighborhood of zero and are different from $8$. As a by-product of the analysis, we show in this language a version of the continuity in $\kappa$ of the SLE$_{\kappa}$ traces for all $\kappa$ in compact intervals as above.
\end{abstract}

\section{Introduction}

The Loewner equation (also known as the Loewner evolution) was introduced by Charles Loewner in $1923$ in \cite{lowner1923untersuchungen} and it played an important role in the proof of the Bieberbach Conjecture \cite{bieberbach1916uber} by Louis de Branges in $1985$ in \cite{de1985proof}. In 2000, Oded Schramm introduced in \cite{schramm2000scaling} a stochastic version of the Loewner equation, the Stochastic Loewner Evolution ($SLE_\kappa$). The $SLE_{\kappa}$ describes the evolution of a curve in terms of a driving function that is chosen to be $\sqrt{\kappa}B_t$, with $\kappa \geq0$ a real parameter and $B_t, \hspace{1mm} t \in [0,\infty),$ a real-valued standard Brownian motion. This is a one-parameter family of random planar fractal curves that are the only possible conformally invariant scaling limits of interfaces of a number of discrete models that appear in planar Statistical Physics. In several cases, it was proved that indeed the interfaces converge to the $SLE_{\kappa}$ curves. We refer to \cite{lawler2008conformally} for a detailed study of the object and many of its properties.

%

 The problem of continuity of the traces generated by Lowener chains was studied in the context of chains driven by bounded variation drivers in \cite{shekhar2017remarks}, where the continuity of the traces generated by the Loewner chains was established. Also, the question appeared in \cite{lind2010collisions}, where the Loewner chains were driven by H\" older-$1/2$ functions with norm bounded by $\sigma$ with $\sigma < 4\,.$ In this context, the continuity of the corresponding traces was established with respect to the uniform topologies on the space of drivers and with respect to the same topology on the space of simple curves in $\mathbb{H}\,.$ Another paper that addressed a similar problem is \cite{sheffield2012strong}, in which the condition $||U||_{1/2}<4$ on the driver $U_t$ of the Loewner differenial equation is avoided at the cost of assuming some conditions on the limiting trace. Some stronger continuity results are obtained in  \cite{friz2017existence} under the assumption that the driver $U_t$ of the Loewner differential equation has finite energy, in the sense that $\dot{U}$ is square integrable. 
Also, the continuity in $\kappa$ of $SLE_{\kappa}$ was studied in terms of the topology of weak convergence for the associated probability  measures on the space of curves in \cite{kemppainen2012random} for curves in the upper half-plane and in \cite{karrila2018limits} for more general domains.

The question appears naturally when considering the solution of the corresponding welding problem in \cite{astala2011random}. In this paper it is proved that the trace obtained when solving the corresponding welding problem is continuous in a parameter that appears naturally in the setting. In the context of $SLE_{\kappa}$ traces the problem was studied in \cite{viklund2014continuity}, where the continuity in $\kappa$ of the $SLE_{\kappa}$ traces was proved for any $\kappa <2.1$. A stronger result is proved in \cite{friz2019regularity}, where the a.s. continuity in $\kappa$ of the SLE traces is proved for $\kappa <8/3.$

Our method relies on the Quasi-Sure Stochastic Analysis through Aggregation as constructed in \cite{soner2011quasi}. The construction in \cite{soner2011quasi} is suitable when one works with mutually singular probability measures. In the case when the measures are absolutely continuous, the situation becomes simpler since one can work under the nullsets of the dominating measure directly.
In \cite{soner2011quasi}, the authors work with a family of local martingale measures $\mathbb{P}_a$ indexed by a parameter $a$ such that under $\mathbb{P}_0$ the canonical process (coordinate process) is a Brownian Motion (BM). When considering the family of measures $\mathbb{P}_{a}$, the canonical process becomes under each $\mathbb{P}_a$ a local martingale with quadratic variation $a$. In \cite{soner2011quasi} it is further shown that if the family of local martingale measures satisfies certain assumptions, then one can define a notion of $\textit{aggregator}$. Further, this notion of aggregator is used to construct the Universal Brownian Motion $W_t=\int_0^t a_s^{-1/2}dB_s$, i.e. an aggregator for Brownian motion, that is a $\mathbb{P}_a$- BM under any $\mathbb{P}_a$ in the family of probability measures. These constructions are natural when one is interested in studying problems related to uncertain volatility in Financial Markets. Morever, the aggregation result can be transported from the Brownian driver to diffusion equations with strong solutions, i.e. one can construct an aggregator of solutions of such SDEs.
 In order to make the link with $SLE_{\kappa}$ theory, we use the Universal Brownian motion (see \cite{soner2011quasi}) as a driver for the Loewner differential equation. In our case, the role of the parameter $a$ will be played by the natural parameter $\kappa$ in the $SLE_{\kappa}$ theory, since this is the volatility in this setting.
Using this, one can construct SLE traces simultaneously quasi-surely, i.e. simultaneously for a family of measures $\mathbb{P}_{\kappa}$, for all $\kappa \in \mathcal{K}\cap \mathbb{R}_+ \setminus ([0, \epsilon)\cup \{8\})$, for any $\epsilon>0$ with $\mathcal{K} \subset \mathbb{R}_{+}$ a nontrivial compact interval, using the aggregated solution to a stochastic differential equation that appears in the analysis, and expressing the derivative of the conformal maps in terms of this aggregated solution. Using the Quasi-sure Stochastic Analysis through Aggregation method, one can view these models in a unified framework. 
Furthermore, in this setting one can show a version of the continuity in $\kappa$ of the $SLE_{\kappa}$ traces, that we call  quasi-sure continuity in $\kappa$, using an estimate between conformal maps solving the Loewner Differential Equation whose drivers are close to each other obtained in \cite{viklund2014continuity}. 

The paper is divided in several sections. In the first part of the paper, we construct quasi-surely the $SLE_{\kappa}$ traces and in the second part we prove the (quasi-sure) q.s. continuity in $\kappa$ for $\kappa \in \mathcal{K}\cap \mathbb{R}_+ \setminus ([0, \epsilon) \cup \{8\})$ of these objects. We remark that in the case  $\kappa \in [0, \epsilon]$ the a.s. continuity in $\kappa$ of the $SLE_{\kappa}$ is known from previous works (see \cite{viklund2014continuity} and \cite{friz2019regularity}).

\textbf{Acknowledgement:} 
I acknowledge the support of NYU-ECNU Institute of Mathematical Sciences at NYU Shanghai.
Also, I would like to thank especially Johannes Wiesel who introduced me to the Quasi-Sure Stochastic Analysis through Aggregation method and who read carefully previous versions of this manuscript and gave me very useful insights along the way as well as to Alex Mijatovic who I have been discussing around the topic and gave me useful information. Also, I would like to thank Yizheng Yuan, Dmitry Beliaev, Titus Lupu, Jianping Jiang, Ionel Popescu and Iulian Cimpean for useful discussions and suggestions. Special thanks also for Lukas Schoug who read the most recent version of the manuscript and gave me useful remarks.


\section{Preliminaries}

We start by introducing objects needed in our analysis. For the SLE theory the exposition is based on \cite{lawler2008conformally} and \cite{berestycki2014lectures} and for the Quasi-Sure Stochastic Analysis through Aggregation the exposition is based on \cite{soner2011quasi} which we refer to for more details.

\subsection{Introduction to $SLE_{\kappa}$ theory}

\indent An important object in the study of the Loewner differential equation is the $\mathbb{H}$-compact hull that is a bounded closed set in $\mathbb{H}$ such that its complement in $\mathbb{H}$ is simply connected.  To every compact $\mathbb{H}$-hull, that we typically denote by $K$ we associate a canonical conformal map $g_K : \mathbb{H}\setminus K \to \mathbb{H}$ that is called the \textit{mapping out function of $K$}.


Using the Riemann Mapping Theorem, we get uniqueness by imposing the \textit{hydrodynamic normalization} for $g_K$, i.e. we require that the mapping near infinity is of the form 

$$g_K(z)=z+\frac{a_K}{z}+O(|z|^{-2})\,, \hspace{3mm} |z| \to \infty\,. $$
\noindent
The coefficient $a_K$ that appears in the expansion at infinity of the mapping is called $\textit{half-plane capacity}\,.$ Throughout the paper we use the notation $\text{hcap}(K)$ for the half-plane capacity $a_K$. \par

We work with a family of growing compact hulls $K_t$ and denote $H_t\;:=\; \mathbb{H}\setminus K_t\,.$ Firstly, we define the radius of a hull to be $$rad(K)=\inf \{ r \geq 0 : K \subset  r \mathbb{D} \text{+ x for some x} \in \mathbb{R} \}\,.$$

\begin{definition}
Let $(K_t)_{t \geq 0}$ be a family of increasing $\mathbb{H}$-hulls, i.e. $K_s$ is contained in $K_t$ whenever $s<t\,.$ For $s<t\,,$ set $K_{s,t}=g_{K_s}(K_t\setminus K_s)\,.$ We say that $(K_t)_{t \geq 0}$ has the \textit{local growth property} if 
$$rad(K_{t, t+h}) \to 0 \hspace{2mm} \text{as} \hspace{2mm} h \to 0, \hspace{2mm} \text{uniformly on compacts in t.} $$
\end{definition}

The first connection between the family of growing compact $\mathbb{H}$-hulls and the real-valued path $(U_t)_{t \geq 0}$ is done in the following proposition.

\begin{proposition}[Proposition $7.1$ of \cite{berestycki2014lectures}]
Let $(K_t)_{t \geq 0}$ be an increasing family of compact $\mathbb{H}$-hulls having the local growth property. Then, $K_{t+}=K_t$ for all $t\,.$ Moreover, the mapping $t \mapsto \text{hcap}(K_t)$ is continuous and strictly increasing on $[0, \infty)\,.$ Moreover, for all $t \geq 0$, there is a unique $U_t \in \mathbb{R}$ such that $U_t \in \bar{K}_{t, t+h}\,,$ for all $h >0\,,$ and the process $(U_t)_{t \geq 0}$ is continuous.  
\end{proposition}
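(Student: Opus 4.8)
The plan is to reduce everything to a handful of standard facts about compact $\mathbb{H}$-hulls, using the local growth property only through the single quantitative consequence that $\mathrm{rad}(K_{s,t})\to 0$ as $t-s\to 0$, locally uniformly in $s$ (this is precisely the hypothesis applied with $h=t-s$, so that $K_{s,t}=K_{s,s+h}$). The facts I would invoke are: (a) $\mathrm{hcap}(K)\le\mathrm{rad}(K)^2$, since $K$ lies in a half-disc of radius $\mathrm{rad}(K)$ centred on $\mathbb{R}$ and $\mathrm{hcap}$ is monotone; (b) $\sup_z|g_K(z)-z|\le C\,\mathrm{rad}(K)$ for a universal constant $C$, a bound inherited by the continuous boundary extensions of $g_K$ and of $g_K^{-1}$; (c) the additivity $\mathrm{hcap}(K_t)=\mathrm{hcap}(K_s)+\mathrm{hcap}(K_{s,t})$ for $s<t$, read off from $g_{K_t}=g_{K_{s,t}}\circ g_{K_s}$; and (d) a nonempty compact $\mathbb{H}$-hull has strictly positive half-plane capacity. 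I would also record the composition identity $g_{K_{s,u}}=g_{K_{t,u}}\circ g_{K_{s,t}}$ for $s<t<u$, equivalently $K_{s,u}\setminus K_{s,t}=f_{K_{s,t}}(K_{t,u})$ where $f_{K_{s,t}}:=g_{K_{s,t}}^{-1}$.

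First I would treat the half-plane capacity. For $s<t$, combining (c) with (a) gives $0\le\mathrm{hcap}(K_t)-\mathrm{hcap}(K_s)=\mathrm{hcap}(K_{s,t})\le\mathrm{rad}(K_{s,t})^2$, and since $\mathrm{rad}(K_{s,t})\to0$ both as $t\downarrow s$ and as $s\uparrow t$ (the latter using the uniformity in the local growth property), $t\mapsto\mathrm{hcap}(K_t)$ is continuous; it is strictly increasing because each $K_{s,t}$ is a nonempty compact $\mathbb{H}$-hull, hence of positive capacity by (d). For right-continuity of the hulls, set $K_{t+}:=\bigcap_{s>t}K_s$, which is readily seen to be a compact $\mathbb{H}$-hull containing $K_t$ (its complement in $\mathbb{H}$, an increasing union of simply connected domains, is again a simply connected domain). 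By monotonicity $\mathrm{hcap}(K_{t+})\le\mathrm{hcap}(K_s)$ for every $s>t$, and the right-hand side tends to $\mathrm{hcap}(K_t)$ as $s\downarrow t$; together with $\mathrm{hcap}(K_t)\le\mathrm{hcap}(K_{t+})$ this forces $\mathrm{hcap}(K_{t+})=\mathrm{hcap}(K_t)$. If $K_{t+}\supsetneq K_t$, then $g_{K_t}(K_{t+}\setminus K_t)$ is a nonempty compact $\mathbb{H}$-hull of capacity $\mathrm{hcap}(K_{t+})-\mathrm{hcap}(K_t)=0$, contradicting (d); hence $K_{t+}=K_t$.

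Next I would construct the driving point. Fix $t$. The closures $\overline{K_{t,t+h}}$, $h>0$, form a nested decreasing family of nonempty compacts with $\diam\overline{K_{t,t+h}}\le2\,\mathrm{rad}(K_{t,t+h})\to0$ as $h\downarrow0$, so their intersection is a single point, which I call $U_t$; by construction $U_t$ lies in every $\overline{K_{t,t+h}}$ and is the only point with this property. To see that $U_t\in\mathbb{R}$, choose for each $h$ a real $x_h$ with $K_{t,t+h}\subseteq x_h+\mathrm{rad}(K_{t,t+h})\overline{\mathbb{D}}$; any two of these closed discs intersect (both contain $K_{t,t+h'}$ for small $h'$), so $(x_h)$ is Cauchy in $\mathbb{R}$ and its limit, a real number, must coincide with $U_t$. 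I also note, for use below, that $U_t\in\overline{K_{t,u}}$ for every $u>t$.

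The remaining and hardest point is the continuity of $t\mapsto U_t$, which I expect to be the main obstacle. I would obtain it from the estimate $|U_t-U_s|\le C\,\mathrm{rad}(K_{s,t})+2\,\mathrm{rad}(K_{s,t'})$, valid for all $s<t<t'$. Indeed $U_s\in\overline{K_{s,t'}}$ by the last remark; and from the composition identity $K_{s,t'}\setminus K_{s,t}=f_{K_{s,t}}(K_{t,t'})$ together with $U_t\in\overline{K_{t,t'}}$ and the continuity of the boundary extension of $f_{K_{s,t}}$, the point $f_{K_{s,t}}(U_t)$ also lies in $\overline{K_{s,t'}}$. Since $\diam\overline{K_{s,t'}}\le2\,\mathrm{rad}(K_{s,t'})$ and $|f_{K_{s,t}}(U_t)-U_t|\le C\,\mathrm{rad}(K_{s,t})$ by (b), the triangle inequality gives the estimate. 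Finally, because $\mathrm{rad}(K_{s,u})=\mathrm{rad}(K_{s,s+(u-s)})\to0$ as $u-s\to0$ uniformly for $s$ in compact intervals, taking $t$ close to $s$ and then $t'$ close to $t$ makes the right-hand side arbitrarily small; hence $(U_t)_{t\ge0}$ is continuous, which would complete the argument.
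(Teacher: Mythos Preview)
The paper does not contain a proof of this proposition. It appears in Section~2.1 (Preliminaries) as a cited result, attributed to Proposition~7.1 of \cite{berestycki2014lectures}, and is stated without argument as background for the Loewner theory. There is therefore nothing in the paper to compare your proposal against.

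That said, your proof is a correct and standard treatment of this foundational fact. The reduction to the four basic hull facts (a)--(d), the capacity argument for $K_{t+}=K_t$, the nested-compacts construction of $U_t$, and the continuity estimate via $|U_t-U_s|\le C\,\mathrm{rad}(K_{s,t})+2\,\mathrm{rad}(K_{s,t'})$ using the composition identity and the uniform displacement bound (b) are all sound. One small point worth tightening: in the argument that $U_t\in\mathbb{R}$, rather than tracking the centres $x_h$, it is cleaner to observe that $\overline{K_{t,t+h}}\cap\mathbb{R}\ne\emptyset$ for every $h>0$ (any compact $\mathbb{H}$-hull has closure meeting $\mathbb{R}$), so the nested intersection already lies in $\mathbb{R}$. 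But this is cosmetic; the argument as written is fine.
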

The map $t \mapsto hcap(K_t)/2$ is a non-decreasing homeomorphism on $[0, T)$ and by choosing $\tau $ to be the inverse of this homeomorphism, we obtain a new family of hulls $K'_t$  in a new parametrization such that $\text{hcap}(K'_t) =2t\,.$ This is the canonical parametrization that we use throughout the paper. We use the standard terminology for this, i.e parametrization by half-plane capacity.

In the following proposition, we introduce the Loewner differential equation starting from the family of growing compact hulls. The main idea is that the local growth property of the hulls gives a description in terms of a specific differential equation for the associated mapping out functions.

\begin{proposition}[Proposition $7.3$ of \cite{berestycki2014lectures}]
Let $(K_t)_{t \geq 0}$ be a family of increasing compact hulls in $\mathbb{H}$ satisfying the local growth property and that are parametrized by the halfplane capacity. Let $(U_t)_{t \geq 0}$ be its Loewner transform. Set $g_t=g_{K_t}$ and $T(z)= \inf\{t \geq 0 : z \in K_t\}\,.$ Then, for all $z \in \mathbb{H}\,,$ the function $(g_t(z): t \in [0, T(z))$ is differentiable with respect to $t$ and satisfies the Loewner differential equation
$$\dot{g}_t(z)=\frac{2}{g_t(z)-U_t}\,. $$
Moreover, if $T(z) < \infty$ then $g_t(z)-U_t \to 0$ as $t \to T(z)\,.$
\end{proposition}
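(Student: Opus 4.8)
I would extract the differential equation from the flow (composition) rule for mapping‑out functions together with the short‑time expansion of $g_K$ for hulls of small radius, and then read off the behaviour at $T(z)$ from soft ODE considerations and the uniform continuity of $U$. So I would first fix $z\in\mathbb H$ and record the flow rule: for $s<t$ the set $K_{s,t}=g_s(K_t\setminus K_s)$ is again a compact $\mathbb H$-hull, uniqueness in the Riemann mapping theorem (imposed through the hydrodynamic normalisation) gives $g_t=g_{K_{s,t}}\circ g_s$ on $\mathbb H\setminus K_t$, and half-plane capacity is additive under composition, so $\mathrm{hcap}(K_{s,t})=\mathrm{hcap}(K_t)-\mathrm{hcap}(K_s)=2(t-s)$ in the chosen parametrisation. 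By the previous proposition $U_s\in\overline{K_{s,s+h}}$ for all $h>0$, while the local growth property gives $\mathrm{rad}(K_{s,s+h})\to0$ as $h\to0$ uniformly for $s$ in compacts; since the enclosing discs in the definition of $\mathrm{rad}$ are centred on $\mathbb R$, this means $\overline{K_{s,s+h}}$ lies in a disc of radius $\rho(s,h)$ about the real point $U_s$, with $\sup_{s\leq t_1}\rho(s,h)\to0$ as $h\to0$ for each fixed $t_1$.

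The analytic input I would invoke is the standard localised expansion of a mapping-out function near infinity (see e.g.\ \cite{lawler2008conformally,berestycki2014lectures}): there is an absolute constant $C$ so that for any compact $\mathbb H$-hull $K$ with $\overline K\subseteq\overline{\mathbb{D}(x_0,r)}$, $x_0\in\mathbb R$, and any $w$ with $|w-x_0|>r$,
$$\Bigl|g_K(w)-w-\frac{\mathrm{hcap}(K)}{w-x_0}\Bigr|\leq\frac{C\,r\,\mathrm{hcap}(K)}{|w-x_0|\,(|w-x_0|-r)}.$$
Fixing $t<T(z)$ and $t_1\in(t,T(z))$, I note that $g_s(z)\in\mathbb H$ for $s<T(z)$, so the continuous map $s\mapsto|g_s(z)-U_s|\geq\mathrm{Im}\,g_s(z)>0$ is bounded below by some $\eta>0$ on $[0,t_1]$; for $h$ small enough that $\rho(s,h)<\eta$ for all $s\leq t_1$ the point $g_s(z)$ lies outside $K_{s,s+h}$, so plugging $K=K_{s,s+h}$, $x_0=U_s$, $w=g_s(z)$ into the expansion and using $g_{s+h}(z)=g_{K_{s,s+h}}(g_s(z))$, $\mathrm{hcap}(K_{s,s+h})=2h$ yields
$$g_{s+h}(z)-g_s(z)=\frac{2h}{g_s(z)-U_s}+O\!\Bigl(\tfrac{\rho(s,h)\,h}{\eta^2}\Bigr),\qquad h\downarrow0,$$
with remainder $o(h)$ uniformly in $s\leq t_1$. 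This already shows $s\mapsto g_s(z)$ is locally Lipschitz, hence continuous, on $[0,T(z))$ with right derivative $2/(g_s(z)-U_s)$; running the same estimate with $g_t(z)=g_{K_{t-h,t}}(g_{t-h}(z))$ and invoking the just-obtained continuity of $g_\cdot(z)$ and of $U$ gives the matching left derivative. Since $t\mapsto2/(g_t(z)-U_t)$ is continuous on $[0,T(z))$, the curve $t\mapsto g_t(z)$ is $C^1$ there and solves $\dot g_t(z)=2/(g_t(z)-U_t)$.

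For the final claim I would argue as follows, assuming $T(z)<\infty$. Right-continuity of the hulls (previous proposition) gives $z\in K_{T(z)}$, so for $s<T(z)$ one has $g_s(z)\in K_{s,T(z)}$ and $U_s\in\overline{K_{s,T(z)}}$, a hull of half-plane capacity $2(T(z)-s)$; by the standard capacity-versus-height estimate $\sup_{w\in K}\mathrm{Im}\,w\leq C\sqrt{\mathrm{hcap}(K)}$ this forces $\mathrm{Im}\,g_s(z)\to0$ as $s\uparrow T(z)$. Writing $Z_s=g_s(z)-U_s$, from $\tfrac{d}{ds}\log\mathrm{Im}\,g_s(z)=-2/|Z_s|^2$ and $\log\mathrm{Im}\,g_s(z)\to-\infty$ I get $\int^{T(z)}|Z_s|^{-2}\,ds=\infty$, so $\liminf_{s\uparrow T(z)}|Z_s|=0$; also $|Z_s|$ stays bounded since $|\dot g_s(z)|\leq2$ whenever $|Z_s|\geq1$ and $U$ is bounded. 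If $L:=\limsup_{s\uparrow T(z)}|Z_s|>0$, then (using $\liminf|Z_s|=0$) $|Z_s|$ oscillates between values $\leq L/4$ and values $\geq3L/4$ near $T(z)$; on an excursion $[a_n,b_n]$ with $|Z_{a_n}|=L/4$, $|Z_{b_n}|=3L/4$ and $|Z_s|\geq L/4$ throughout, $|\dot g_s(z)|\leq8/L$ gives $|g_{b_n}(z)-g_{a_n}(z)|\leq\tfrac8L(b_n-a_n)$, so
$$\tfrac{L}{2}\leq|Z_{b_n}-Z_{a_n}|\leq\tfrac8L(b_n-a_n)+|U_{b_n}-U_{a_n}|,$$
which is impossible because $\sum_n(b_n-a_n)\leq T(z)<\infty$ forces $b_n-a_n\to0$ while $U$ is uniformly continuous on $[0,T(z)]$. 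Hence $g_t(z)-U_t\to0$ as $t\to T(z)$.

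The main obstacle I expect is the second step: pinning down the uniform localised expansion of $g_K$ for small hulls and checking that its remainder is genuinely $o(h)$ \emph{uniformly} in $t$ over compact sub-intervals of $[0,T(z))$ — this is precisely where the local growth property and the lower bound $\eta$ on $|g_t(z)-U_t|$ are needed. Passing from one-sided to two-sided differentiability is then routine, and the $T(z)$-statement is soft, resting only on monotonicity of $\mathrm{Im}\,g_t(z)$, the capacity-versus-height bound, and the uniform continuity of $U$.
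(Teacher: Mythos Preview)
The paper does not give its own proof of this proposition; it is quoted in the preliminaries as Proposition~7.3 of \cite{berestycki2014lectures} and used as a black box. Your argument is the standard one found in that reference (and in \cite{lawler2008conformally}): the flow identity $g_t=g_{K_{s,t}}\circ g_s$, additivity of half-plane capacity, the small-hull expansion $g_K(w)=w+\mathrm{hcap}(K)/(w-x_0)+O(r\,\mathrm{hcap}(K))$, and then a soft analysis of the ODE near $T(z)$; the steps are correct and there is nothing to compare against in the present paper.
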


The reverse situation is also true, i.e. from the driving function $U_t\,,$ we recover the family of growing compact $\mathbb{H}$-hulls.
 We have this result in the following Theorem. Note that in \cite{lawler2008conformally}, the result is stated for Loewner differential equation driven by measures on the real line. We state it only for the particular choice of measure on the real line $\mu_t=2\delta_{U_t}$.
\begin{theorem}[Theorem $4.6$ of \cite{lawler2008conformally}]
For all $z \in \mathbb{H} \setminus \{\zeta_0\}$, there is a unique time $T(z)\in (0, \infty]$ and a unique continuous map $(g_t(z): t \in [0, T(z))$ in $\mathbb{H}\setminus K_t=H_t$ such that, for all $ t \in [0, T(z))$ we have $g_t \neq U_t$ and
$$g_t(z)=z+\int_0^t \frac{2}{g_s(z)-U_s}ds\,, $$ and such that $ |g_t(z)-U_t| \to 0$ as $ t \to T(z)$ whenever $T(z) < \infty\,.$ Set $\zeta_0=0$ and define $$H_t=\{z \in \mathbb{H}: T(z)>t \} \,.$$
Then, for all $t \geq 0$ $H_t$ is open and $g_t :H_t \to \mathbb{H}$ is conformal onto $\mathbb H$. Moreover, the family of sets $K_t$= $\left( z \in \mathbb{H} : T(z) \leq t \right)$ is an increasing family of compact $\mathbb{H}$-hulls having the local growth property with $hcap(K_t)$=$2t\,,$ and $g_{K_t}=g_t,$ for all $t\,.$ Moreover, the driving function $U_t$ is the Loewner transform of $(K_t)_{t \geq 0}\,.$
\end{theorem}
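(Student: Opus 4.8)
The plan is to construct everything directly from the integral equation, treating it as an ODE in the initial point $z$. First I would fix $z \in \mathbb{H}$ and observe that the field $F(t,w) = 2/(w - U_t)$ is continuous in $t$ and holomorphic, hence locally Lipschitz, in $w$ on the open set $\{w \neq U_t\}$; Picard--Lindel\"of then yields a unique maximal solution $t \mapsto g_t(z)$ of $\dot g_t(z) = F(t, g_t(z))$ on a maximal interval $[0, T(z))$ with $g_0(z) = z$, and since $z \in \mathbb{H}$ while $U_0 \in \mathbb{R}$ we have $g_0(z) \neq U_0$, so $T(z) > 0$. The decisive elementary fact is $\frac{d}{dt}\im g_t(z) = -2\,\im g_t(z)/|g_t(z) - U_t|^2 < 0$: the imaginary part is strictly decreasing and stays positive, so the trajectory never leaves $\mathbb{H}$ and in particular $g_t(z) \neq U_t$ automatically. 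Since $U$ is bounded on compact time intervals and $|\dot g_t(z)| \leq 2/\im g_t(z)$, the solution cannot run off to $\infty$ in finite time unless $\im g_t(z)$ collapses to $0$; a short argument then shows the only way the solution can fail to extend past a finite $T(z)$ is $|g_t(z) - U_t| \to 0$ as $t \uparrow T(z)$. This gives the first half of the statement.

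For the conformality claims I would use the usual soft arguments. Holomorphic dependence of solutions on the initial point shows that $H_t := \{z : T(z) > t\}$ is open and that $z \mapsto g_t(z)$ is holomorphic there; it is injective because two trajectories emanating from distinct points stay distinct (uniqueness run backward in time), so $g_t$ is conformal onto its image. Surjectivity onto $\mathbb{H}$ I would get by solving the equation backward from time $t$: for $w \in \mathbb{H}$ the solution of $\psi_r' = -2/(\psi_r - U_{t-r})$ with $\psi_0 = w$ has $\im \psi_r$ nondecreasing, hence stays in $\mathbb{H}$, exists on all of $[0,t]$ with $|\psi_r'| \leq 2/\im w$ bounded, so $z := \psi_t \in H_t$ and $g_t(z) = w$. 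Thus $g_t : H_t \to \mathbb{H}$ is a conformal bijection, $H_t$ is connected and simply connected, and $K_t := \mathbb{H} \setminus H_t$ has simply connected complement in $\mathbb{H}$; the inequality $\frac{d}{dt}(\im g_t(z))^2 \geq -4$ gives $K_t \subseteq \{\,0 < \im w \leq 2\sqrt t\,\}$, and a crude bootstrap shows every $z$ with $\dist(z, U([0,t]))$ large enough has $T(z) > t$, so $K_t$ is bounded, hence a compact $\mathbb{H}$-hull, and $K_s \subseteq K_t$ for $s < t$ is clear. The half-plane capacity is read off the equation: for $|z|$ large one has $g_s(z) = z + O(1/|z|)$ uniformly in $s \leq t$, so $2/(g_s(z) - U_s) = 2/z + O(|z|^{-2})$ and integration gives $g_t(z) = z + 2t/z + O(|z|^{-2})$; hence $g_t$ satisfies the hydrodynamic normalisation, so by uniqueness of the mapping-out function $g_t = g_{K_t}$ and $\text{hcap}(K_t) = 2t$, which is manifestly continuous and strictly increasing.

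The remaining, and hardest, part is the local growth property and the identification of $U$ as the Loewner transform. From the composition rule $g_t = g_{K_{s,t}} \circ g_s$ with $K_{s,t} = g_s(K_t \setminus K_s)$, one checks that $g_t \circ g_s^{-1}$ solves $\partial_r\, g_{s, s+r}(w) = 2/(g_{s,s+r}(w) - U_{s+r})$ from the identity at $r = 0$, so $K_{s,t}$ is exactly the hull generated over time $t-s$ by the shifted driver $r \mapsto U_{s+r}$. The technical heart, which I expect to be the main obstacle, is the quantitative estimate that a hull generated over time $h$ by a driver $V$ lies within distance $c\,(\sqrt h + \sup_{r \leq h}|V_r - V_0|)$ of $V_0$, for a universal constant $c$: the vertical bound $\im \leq 2\sqrt h$ is the inequality already used, but the horizontal bound genuinely requires controlling $\re g_r(z)$ along the flow (a distortion/comparison estimate of the kind in \cite{lawler2008conformally}). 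Granting it, $\mathrm{rad}(K_{s,t}) \leq c\,(\sqrt{t-s} + \sup_{r \leq t-s}|U_{s+r} - U_s|)$, and since $U$ is uniformly continuous on compact intervals this tends to $0$ as $t \downarrow s$ uniformly for $s$ in compacts, which is the local growth property. Finally, the proposition above (Proposition~7.1 of \cite{berestycki2014lectures}), applied to $(K_t)$, produces its Loewner transform $(\widetilde U_t)$ characterised by $\widetilde U_t \in \bigcap_{h > 0} \overline{K_{t,t+h}}$; the same estimate forces $\overline{K_{t,t+h}}$ to shrink to the single point $U_t$ as $h \to 0$, so $\widetilde U_t = U_t$, i.e. the driving function we started from is indeed the Loewner transform of the hulls it generates.
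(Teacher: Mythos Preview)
The paper does not prove this statement; it is quoted in the Preliminaries as Theorem~4.6 of \cite{lawler2008conformally} and used as background, so there is no in-paper proof to compare against. Your outline is essentially the standard argument one finds in that reference: Picard--Lindel\"of for local existence and uniqueness of the flow, the computation $\frac{d}{dt}\im g_t(z)=-2\im g_t(z)/|g_t(z)-U_t|^2<0$ to keep trajectories in $\mathbb{H}$ and to characterise finite blow-up, holomorphic dependence on initial data plus backward flow for conformality and surjectivity onto $\mathbb{H}$, the expansion $g_t(z)=z+2t/z+O(|z|^{-2})$ to read off the hydrodynamic normalisation and $\mathrm{hcap}(K_t)=2t$, and the composition law $g_t\circ g_s^{-1}$ to reduce local growth to a diameter bound on short-time hulls.

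The one genuine gap you yourself flag is the horizontal part of the estimate $\mathrm{rad}(K_{s,t})\leq c\bigl(\sqrt{t-s}+\sup_{r\leq t-s}|U_{s+r}-U_s|\bigr)$. The vertical bound $\im\leq 2\sqrt{h}$ follows from $\frac{d}{dt}(\im g_t(z))^2\geq -4$ as you say, but the horizontal control of $\re g_t(z)$ is a separate lemma (in Lawler's treatment it is obtained via a distortion/comparison argument, or equivalently by bounding $|\re g_t(z)-U_t|$ in terms of the driver's oscillation); you have cited rather than supplied it. Since that estimate is exactly what is needed both for local growth and for the identification $\widetilde U_t=U_t$, the proof as written is honest but not self-contained at precisely that step.
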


In the $SLE_{\kappa}$ theory case, the driver is chosen to be $U_t=\sqrt{\kappa}B_t, $ where $B_t$ is a standard one-dimensional Brownian motion and $\kappa \in \mathbb{R}_+$. When studying this theory in the upper half-plane, one usually works with the following families of conformal maps.
\begin{enumerate} 
\item Partial differential equation version for the chordal $SLE_{\kappa}$ in the upper half-plane
\begin{equation}\label{4}
\partial_{t}f(t,z)=-\partial_{z}f(t,z)\frac{2}{z-\sqrt{\kappa}B_{t}}\,, \hspace{3mm} f(0,z)=z, z \in \mathbb{H}\,.
\end{equation}
\item Forward differential equation version for chordal $SLE_{\kappa}$ in the upper half-plane

\begin{equation}\label{5}
\partial_{t}g(t,z)=\frac{2}{g(t,z)-\sqrt{\kappa}B_{t}}\,, \hspace{10mm} g(0,z)=z, z \in \mathbb{H}\,.
\end{equation}

\item  Time reversal differential equation (backward) version for chordal $SLE_{\kappa}$ in the upper half-plane
\begin{equation}\label{6}
\partial_{t}h(t,z)=\frac{-2}{h(t,z)-\sqrt{\kappa}B_{t}}\,, \hspace{10mm} h(0,z)=z, z \in \mathbb{H}\,.
\end{equation}
\end{enumerate} 
There are connections between these three formulations for studying families of conformal maps. For example, at each instance of time $t \in [0,T]$ the map $z \to g_t(z)$ is the inverse of the map $z \to f_t(z)\,.$ 
The connection between the family of maps $h_t(z)$ and $g_t(z)$ is captured in the following lemma.

\begin{lemma}[Lemma 5.5 of \cite{kemppainen2017schramm}]
Let $h_t(z)$ be the solution to the backward Loewner differential equation with driving function $\sqrt{\kappa}B_t$ and let $f_t(z)$ be the solution of the partial differential equation version of the Loewner differential equation. Then, for any $t \in \mathbb{R}_+$, the function $z \to f_t(z+\sqrt{\kappa}B_t)-\sqrt{\kappa}B_t$ and $z \to h_t(z)$ have the same distribution.
\end{lemma}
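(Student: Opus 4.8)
The plan is to exploit the time-reversal symmetry of Loewner chains together with the reversibility of Brownian motion on a bounded interval. I would fix $t \in \R_+$ once and for all; every identity below is a pathwise statement for this fixed $t$, and only at the very end do we pass to distributions.

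First I would run the forward flow backwards in time. Since $f_t \colon \mathbb{H} \to H_t$ is a conformal bijection and the hulls are increasing, so that $H_t \subseteq H_{t-s}$ for $0 \leq s \leq t$, every point $w = f_t(z)$ lies in $H_t$, i.e.\ $T(w) > t$; hence $r \mapsto g_r(w)$ is differentiable on $[0,t]$ by the forward Loewner equation \eqref{5}, and the composition $\psi_s(z) \deq g_{t-s}(f_t(z))$ is well defined on all of $\mathbb{H}$ for $s \in [0,t]$. Differentiating through the composition gives $\partial_s \psi_s(z) = -\dot{g}_{t-s}(f_t(z)) = -2/(\psi_s(z) - \sqrt{\kappa}B_{t-s})$, with $\psi_0 = \mathrm{id}$ and $\psi_t = f_t$. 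In other words, $f_t$ is exactly the time-$t$ value of the backward Loewner flow \eqref{6} run with the time-reversed driver $s \mapsto \sqrt{\kappa}B_{t-s}$, started from the identity. Then I would recenter: \eqref{6} is covariant under a fixed real translation $z \mapsto z+c$, in the sense that $\psi_s(\cdot + c) - c$ again solves \eqref{6} from the identity, with driver shifted by $-c$. Taking $c = \sqrt{\kappa}B_t$ shows that $z \mapsto f_t(z + \sqrt{\kappa}B_t) - \sqrt{\kappa}B_t$ is the time-$t$ value of the backward Loewner flow with driver $s \mapsto \sqrt{\kappa}(B_{t-s} - B_t)$.

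To finish, I would set $W_s \deq B_t - B_{t-s}$ for $s \in [0,t]$, so the driver above equals $-\sqrt{\kappa}W$. Because $(W_s)_{s\in[0,t]}$ is a standard Brownian motion (time reversal of $B$ on $[0,t]$) and Brownian motion is invariant under $x \mapsto -x$, the path $(-\sqrt{\kappa}W_s)_{s\in[0,t]}$ has the same law as $(\sqrt{\kappa}B_s)_{s\in[0,t]}$. Since by the well-posedness of the Loewner equation (the theorem of \cite{lawler2008conformally} quoted above) the time-$t$ solution of \eqref{6} is a measurable functional of the driving path on $[0,t]$, equality in law of the drivers transfers to equality in law of the resulting conformal maps, which is the assertion $z \mapsto f_t(z + \sqrt{\kappa}B_t) - \sqrt{\kappa}B_t \;\eqdist\; z \mapsto h_t(z)$.

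The step I expect to be the main obstacle — indeed the only one requiring genuine care — is the first one: rigorously justifying that $s \mapsto g_{t-s}(f_t(z))$ is differentiable with the stated derivative and is a bona fide solution of \eqref{6}, uniformly enough in $z$ to identify the whole map. This hinges on the fact that $f_t$ maps into $H_t$, so its image stays away from the moving singularity of the forward equation on the entire interval $[0,t]$ and no swallowing time intervenes; it is also worth emphasising that the argument is for a single fixed $t$ and is not a statement about the two flows as processes in $t$. The remaining ingredients — translation covariance of \eqref{6}, time reversibility and sign symmetry of Brownian motion, and the measurable dependence of the backward Loewner map on its driver — are standard and only need to be invoked.
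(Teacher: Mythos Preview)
The paper does not supply its own proof of this lemma: it is quoted twice (in the preliminaries and again as Lemma~\ref{samed}) as a known result from \cite{kemppainen2017schramm}, and is used as a black box. So there is no in-paper argument to compare against.

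Your argument is correct and is essentially the standard one found in the cited reference. The identification $\psi_s(z)=g_{t-s}(f_t(z))$ with $\psi_0=\mathrm{id}$, $\psi_t=f_t$, and $\partial_s\psi_s=-2/(\psi_s-\sqrt{\kappa}B_{t-s})$ is exactly the time-reversal step; the translation covariance with $c=\sqrt{\kappa}B_t$ turns the driver into $\sqrt{\kappa}(B_{t-s}-B_t)$; and the combination of time-reversal and sign symmetry of Brownian motion on $[0,t]$ identifies this driver in law with $(\sqrt{\kappa}B_s)_{s\in[0,t]}$. You are also right that the only point needing care is that $f_t(\mathbb H)=H_t\subset H_{t-s}$ keeps the reversed flow away from the singularity on all of $[0,t]$, and you handle this correctly. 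Nothing is missing.
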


In $SLE_{\kappa}$ theory, a fundamental object of study are the  $SLE_{\kappa}$ traces that we introduce in the following definition.
\begin{definition}
Let $g_t$ be the conformal maps solving the forward Loewner differential equation with $U_t=\sqrt{\kappa}B_t$ . The $SLE_{\kappa}$ trace is defined as
$$\gamma(t):=\lim_{y\to 0}\hat{g}_t^{-1}(iy),$$
where $\hat{g}_t^{-1}(iy)=g_t^{-1}(iy+\sqrt{\kappa}B_t).$
\end{definition}
For general Loewner chains, we have the following definition for hulls generated by a trace.
\begin{definition}
We say that a continuous path $(\gamma_t)_{t \geq 0}$ in $\bar{\mathbb{H}}$ generates a family of increasing compact $\mathbb{H}$-hulls $K_t$ if $H_t=\mathbb{H}\setminus K_t$ is the unbounded component of $\mathbb{H}\setminus \gamma[0,t]$ for all $t \geq 0\,.$ 
\end{definition}
When considering the $SLE_{\kappa}$ case, we have the following fundamental result.
\begin{theorem}[Theorem $4.1$ of \cite{schramm2005basic}]\label{Rohdeschramm}
Let $(K_t)_{t \geq 0}$ be a $SLE_{\kappa}$ for $\kappa \neq 8\,.$  Then, $$\hat{g}_{t}^{-1}(z)= g_{t}^{-1}(z+\sqrt{\kappa}B_t) :\mathbb{H} \mapsto H_t$$ extends continuously to $\bar{\mathbb{H}}$ for all $t \geq 0,$ almost surely. Moreover, $\gamma_t$ is continuous and generates $(K_t)_{t \geq 0}$ almost surely.
\end{theorem}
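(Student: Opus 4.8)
The plan is to follow the original argument of Rohde and Schramm. The starting point is the time-reversal symmetry of Brownian motion together with the Lemma above (Lemma~5.5 of \cite{kemppainen2017schramm}): since $s\mapsto B_t-B_{t-s}$ is again a Brownian motion on $[0,t]$, and since $g_t^{-1}=f_t$, for each fixed $t$ the random map $z\mapsto\hat g_t^{-1}(z)-\sqrt{\kappa}B_t=f_t(z+\sqrt{\kappa}B_t)-\sqrt{\kappa}B_t$ has the same law as the backward-flow map $h_t$ solving \eqref{6}; in particular, one of them extends continuously to $\bar{\mathbb{H}}$ precisely when the other does. Hence it is enough to prove that, for fixed $t$, the conformal map $h_t\colon\mathbb{H}\to h_t(\mathbb{H})$ extends continuously to $\bar{\mathbb{H}}$ almost surely, with a quantitative modulus of continuity for the extension; if that modulus can be made sufficiently uniform in $t$, the statement upgrades to ``almost surely, for all $t$'', and continuity of $t\mapsto\gamma(t)$ then follows by feeding the boundary regularity of $h_s$ into the Loewner flow $g_t\circ g_s^{-1}$ for $|t-s|$ small (the tip $\gamma(t)=\hat g_t^{-1}(0)$ equals $g_s^{-1}$ evaluated at a point close to $\sqrt{\kappa}B_s$).

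The analytic core is a moment estimate on the derivative of the backward flow. Fix $z=x+iy$, set $\hat Z_s=h_s(z)-\sqrt{\kappa}B_s=\hat X_s+i\hat Y_s$ and $O_s=h_s'(z)$; differentiating \eqref{6} gives
\[
d\hat Z_s=-\frac{2}{\hat Z_s}\,ds-\sqrt{\kappa}\,dB_s,\qquad \frac{dO_s}{O_s}=\frac{2}{\hat Z_s^{2}}\,ds,
\]
so $\hat Y_s$ is increasing and $\log|O_s|=2\int_0^s\re(\hat Z_u^{-2})\,du$. Performing the time change $dv=2|\hat Z_s|^{-2}\,ds$, which makes $\hat Y_s$ equal to $ye^{v}$, one finds that $\theta_s:=\arg\hat Z_s$ is an autonomous diffusion on $(0,\pi)$ with a $\kappa$-dependent drift, and that $\log|O_s|=\int_0^{v}\cos(2\theta_w)\,dw$. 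One then tilts by the exponential martingale of the driving noise (Girsanov) and, with weights of the form $\hat Y_s^{a}|O_s|^{b}$ whose exponents $a,b$ are tied to a free parameter $\lambda>0$, checks that a suitable such weight is a supermartingale; evaluating at a fixed time (possibly after stopping the flow at a suitable time) yields
\[
\E\big[\,|h_{t}'(x+iy)|^{\lambda}\,\big]\le C(t,\kappa,\lambda)\,y^{-\beta(\kappa,\lambda)}
\]
for an explicit exponent $\beta(\kappa,\lambda)$. The crucial algebraic fact is that for every $\kappa\neq8$ one can choose $\lambda$ with $\beta(\kappa,\lambda)<1$; precisely at $\kappa=8$ the best exponent equals $1$ and the method breaks down, which is why that value is excluded (and is treated separately in the literature via the uniform spanning tree Peano curve).

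Given this estimate, continuity of the boundary extension comes from a distortion-plus-Borel--Cantelli argument. By the Koebe $1/4$-theorem and Koebe distortion, $|h_t'(z)|\asymp\dist\!\big(h_t(z),\partial h_t(\mathbb{H})\big)/\im z$, so controlling $\int_0^{y_0}\sup_x|h_t'(x+iy)|\,dy$ controls the oscillation of $h_t$ as $\im z\downarrow0$. Discretising $z$ over a dyadic grid and summing the moment bound over scales (convergent because $\beta(\kappa,\lambda)<1$) shows that, off a null set, $h_t$ is H\"older continuous up to $\bar{\mathbb{H}}$; discretising $t$ as well and using the regularity of the flow in $t$ lets the null set be taken independent of $t$. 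Transporting back through the time-reversal identification, $\hat g_t^{-1}$ extends continuously to $\bar{\mathbb{H}}$ for all $t$ simultaneously, almost surely, so $\gamma(t)=\lim_{y\to0}\hat g_t^{-1}(iy)$ is an almost surely continuous path; that it generates $(K_t)_{t\ge0}$, i.e.\ that $H_t$ is the unbounded component of $\mathbb{H}\setminus\gamma[0,t]$, is a general consequence of Theorem~4.6 of \cite{lawler2008conformally} applied to a Loewner chain whose maps extend continuously to the boundary. The main obstacle is the derivative estimate itself: the time change and the Girsanov tilt are bookkeeping, but pinning down the sharp exponent $\beta(\kappa,\lambda)$, and --- for $\kappa>4$, where the trace is non-simple --- pushing the Borel--Cantelli step and the uniformity in $t$ through, is where the real work lies.
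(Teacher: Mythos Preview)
Your sketch is the Rohde--Schramm argument that the paper outlines (without detailed proof) in Section~4: pass to the backward flow $h_t$ via the equality-in-law lemma, extract a derivative estimate from an explicit martingale/tilt (the paper's Proposition~\ref{prop4.1} and Corollary~\ref{coro}; your time-change-plus-Girsanov computation is the same calculation packaged differently), and feed this into a dyadic Borel--Cantelli scheme together with the Brownian modulus of continuity (the paper's Proposition~4.33). So the approach matches.

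One point to sharpen: the condition you state, ``$\beta(\kappa,\lambda)<1$'' for the exponent in $\mathbb{E}\bigl[|h_t'(x+iy)|^{\lambda}\bigr]\le C\,y^{-\beta}$, is not what makes the Borel--Cantelli sum converge. You must sum over the $2^{2j}$ dyadic times at spatial scale $2^{-j}$, so you need $\mathbb{P}\bigl(|h_t'(i2^{-j})|\ge 2^{j\theta}\bigr)=o(2^{-2j})$ for some $\theta<1$; via Markov's inequality this reads $\lambda\theta>\beta+2$, not $\beta<1$. In the paper's parametrisation (with $r_0=\tfrac14+\tfrac{2}{\kappa}$ and the associated $b_0$) the genuine ``$\kappa\neq 8$'' miracle is that the optimized tail exponent equals $4/\kappa+1+\kappa/16$, which by AM--GM strictly exceeds $2$ precisely when $\kappa\neq 8$; this is the inequality that drives the argument, and your $\beta<1$ should be replaced by it.
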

\begin{remark}
The same result holds for $\kappa =8$ as it was showed in \cite{lawler2011conformal} using a different approach.
\end{remark}

\subsection{Introduction to Quasi-sure Stochastic Analysis through Aggregation}

In this section, we introduce the Quasi-Sure Stochastic Analysis through Aggregation, following \cite{soner2011quasi}. We refer the reader to \cite{soner2011quasi} and \cite{denis2006theoretical} for further information. In \cite{soner2011quasi} the interest is to develop stochastic analysis simultaneously under an uncountable family of probability measures with certain properties (that are not dominated by a single probability measure) driven by applications, between others, in Mathematical Finance (for example, uncertain volatility models). In the context of $SLE_{\kappa}$ theory the parameter $\kappa$ will play the role of the uncertain volatility in our analysis. The results in  \cite{soner2011quasi} extend the theory that one naturally has for a fixed probability measure to define Stochastic Analysis simultaneously for a family of probability measures using the notion of \textit{aggregation} that we define in the following.

Let us consider the probability space $(\Omega, \mathbb{F}^B, \mathbb{P})$ where $\Omega=C(\mathbb{R}_{+}, \mathbb{R})$ and let $\mathbb{F}=\mathbb{F}^B$ be the filtration generated by the canonical process $B$.  For example, if $\mathbb{P}$ is the Wiener measure then the canonical process is a standard Brownian motion. Throughout our analysis we will consider a family of probability measures on this space indexed by a parameter, that will change the law of the canonical process under each of them.
We recall from \cite{soner2011quasi} that a probability measure $\mathbb P$ is a local martingale measure if the canonical process $B$ is a local martingale under $\mathbb P$. It is proved in \cite{karandikar1995pathwise} that there exists a progresively measurable process denoted as $\int_0^tB_sdB_s$ which coincides with the It\^o integral $\mathbb P$ -a.s. for all local martingale measures $\mathbb P$. In particular, this provides a pathwise definition of 
$$\langle B\rangle _t:=B_t^2-2\int_0^tB_sdB_s$$ and
$$\hat{a}_t:= {\limsup}_{\epsilon \to 0}\frac{1}{\epsilon}[\langle B \rangle_t- \langle B \rangle_{t-\epsilon}].$$

We first introduce as in \cite{soner2011quasi} the following notions. 
\begin{definition}
Let $\bar{\mathcal{P}}_W$ is the set of all local martingale measures $\mathbb P$ such that $\mathbb P$-a.s. $\langle B \rangle _t$ is absolutely continuous in $t$ and $\hat{a}$ takes values in $\mathbb R_+$.
\end{definition}
Let us fix $\mathcal{P} \subset \bar{\mathcal{P}}_W $, an arbitrary subset. We further introduce the notion of $\textit{capacity}$ that we will use throughout our analysis in the next sections.
\begin{definition}[Definition of capacity]
For each $f \in C_b(\Sigma)$- the set of bounded continuous functions on $\Sigma$, we put
$$cap(f)=\sup\{||f||_{L^2(\Sigma, P)} : \mathbb{P} \in \mathcal{P}\}.$$
For a measurable set $A$, we define $cap(A)=cap(I_A)$.
\end{definition}
Capacities naturally have applications in the theory of Risk Measures, see \cite{artzner1999coherent}, \cite{follmer2002convex}.

Next, we introduce the notion of polar set. 
\begin{definition}
 We say that a property holds $\mathcal{P}$-quasi-surely if it holds $\mathbb{P}$-a.s. for all the probability measures $\mathbb{P} \in \mathcal{P}$. We call a set $A$ polar if $cap(A)=0$, i.e. if $\mathbb{P}(A)=0$, for all $\mathbb{P} \in \mathcal{P}.$
\end{definition}

Let us denote $\mathcal{N}_\mathcal P:=\cap _{\mathbb P \in \mathcal P}\mathcal N^\mathbb P (\mathcal F _\infty),$ where $\mathcal{N}^\mathbb P$ is the collection of all $\mathbb{P}$-nullsets in $\mathcal F _\infty$. We use the following universal filtration $\mathfrak F^\mathcal P$ for the mutually singular measures $\{ \mathbb{P}, \mathbb{P} \in \mathcal P \}$.
$$\mathfrak F^\mathcal P :=\{ \mathcal F_t^\mathcal P\}_{t \geq 0}$$
where $$  \mathcal F_t^\mathcal P:= \cap_{\mathbb P \in \mathcal P}\left(\mathcal F_t^\mathbb P \vee \mathcal N_\mathcal P\right)\,.$$
The next definition introduces the notion of \textit{aggregator} that is fundamental in our analysis as it will allow us to describe objects in $SLE_{\kappa}$ theory simultaneously, for all the values $\kappa \in \mathcal{K}\cap \mathbb{R}_+ \setminus ([0, \epsilon) \cup \{8\})$, for any $\epsilon>0$, with $\mathcal{K} \subset \mathbb{R}_{+}$ a nontrivial compact interval.
\begin{definition}\label{aggregator}
Let $ \mathcal P \subset \bar{\mathcal{P}}_W $. Let $\{ X^\mathbb P, \mathbb P \in \mathcal P \}$ be a family of $\mathfrak F ^\mathcal P$ progressively measurable processes. An $\mathfrak F ^\mathcal P$ progressively measurable process $X$ is called a $\mathcal P$-aggregator of the family $\{ X^\mathbb P, \mathbb P \in \mathcal P\}$, if $X=X^\mathbb P$ , $\mathbb P$-a.s. for every $\mathbb P \in \mathcal P$.
\end{definition}
In order to assure the existence of a unique aggregator, one should have checked a consistency condition as well a separability assumption, that we discuss in the next section. In \cite{denis2006theoretical} the Black-Scholes model with uncertain volatility, i.e. with a volatility $\sigma \in [\sigma_m, \sigma_M]$, for constants $\sigma_m$ and $\sigma_M$, as a fundamental motivating example for the study of the quasi-sure stochastic analysis through aggregation. 
\subsection{The universal Brownian motion}
In this section, we introduce the notion of Universal Brownian motion as in \cite{soner2011quasi}, that is an example of an aggregator.\\
We first introduce the required tools to define this notion.  We refer the reader to \cite{soner2011quasi} for more details. 
Let
$$\bar{\mathcal{A}}:= \{ a : \mathbb{R}_+ \to \mathbb{R}_+ |\hspace{2mm} \mathbb F-\text{progresively measurable and} \int_0^t|a_s|ds <+\infty, \forall t \geq 0\}\,.$$

For a given $\mathbb P \in \bar {\mathcal P}_W$, let 

$$ \bar{\mathcal{A}}_W(\mathbb P):= \{ a \in \bar{\mathcal{A}}: a=\hat{a}, \mathbb P-a.s.\}$$

Recall that $\hat{a}$ is the density of the quadratic variation of $\langle B\rangle$ (where $B$ is the canonical process under the Wiener measure on path space) and is defined point-wise. We define 

$$\bar{\mathcal{A}}_W := \cup _{\mathbb P \in \bar{\mathcal{P}}_W} \bar{\mathcal{A}}_W(\mathbb P)$$

In order to construct a measure with a given quadratic variation $a \in \bar{\mathcal{A}}$ as in \cite{soner2011quasi}, we consider the weak solutions of the following stochastic differential equation 
\begin{equation}\label{eq4.4.}
dX_t=a_t^{1/2}(X)dB_t, \hspace{5mm} \mathbb{P}_0\text{-a.s.}
\end{equation}
Furthermore, if the equation \eqref{eq4.4.} has weak uniqueness, let $\mathbb{P}_a \in \bar{\mathcal{P}}_W$ be the unique solution of \eqref{eq4.4.} with initial condition $\mathbb{P}_a(B_0=0)=1,$ and we define 
$$\mathcal{A}_W:=\{a \in \bar{A}_W: \eqref{eq4.4.} \hspace{1mm} \text{has weak uniqueness} \}$$ 
$$\mathcal{P}_W:=\{\mathbb{P}_a, a \in \mathcal{A}_{W}\}$$
Let us fix a subset $\mathcal{A} \subset \mathcal{A}_W$. 
We further denote $$\mathcal{P}=\{\mathbb P_{a}, a \in \mathcal{A}\}.$$ We note that in the previous section, we discussed the case when $\mathcal{P}$ is an arbitrary subset of the set of measures, that in the context of this section takes a concrete form. 

Let us define for any $a,b \in \mathcal{A}$, the disagreement time $$\theta^{a,b}:=\inf \{ t \geq 0: \int_0^t a_sds \neq \int_0^t b_sds\}.$$

\begin{definition}
A subset $\mathcal{A}_0 \subset \mathcal{A}_W$ is called a generating class of diffusion coefficients if

\begin{itemize}

\item $\mathcal{A}_0$ satisfies the concatenation property  $a\bold{1}_{[0,t)}+b\bold{1}_{[t, \infty)} \in \mathcal{A}_0$, for $a, b, \in \mathcal{A}_0, t \geq 0.$ 

\item $\mathcal{A}_0$ has constant disagreement times: for all $a, b \in \mathcal{A}_0$, $\theta^{a,b}$ is constant. 

\end{itemize}
\end{definition}

\begin{definition}[Separability assumption]
Let $\mathcal{T}$ be the set of all $\mathbb{F}$-stopping times taking values in $\mathbb{R}_+ \cup \{\infty\}$. We say $\mathcal A$ is a separable class of diffusion coefficients generated by $\mathcal{A}_0$ if $\mathcal{A}_0 \subset \mathcal{A}_W$ is generated by a class of diffusion coefficients and $\mathcal{A}$ consists of all processes $a$ of the form
$$ a=\sum_0^{\infty}\sum_{i=1}^{\infty}a_i^n\bold{1}_{E_i^n}\bold{1}_{[\tau_n, \tau_{n+1})}$$
where $(a_i^n)_{i,n} \subset\mathcal{A}_0$, $(\tau_n)_n \subset \mathcal{T}$ is non-decreasing with $\tau_0=0$.
\begin{itemize}
\item We have that $ \inf\{n: \tau_n=\infty\}<\infty$ and $\tau_n <\tau_{n+1}$ whenever $\tau_n <\infty$ and each $\tau_n$ takes at most countably many values.

\item For each $n$ $\{E_i^n, i\geq 1\} \subset \mathcal{F}_{\tau_n}$ forms a partition of $\Omega$.
\end{itemize}
\end{definition}


A fundamental result that is proved in \cite{soner2011quasi} is the following theorem that assures that if the conditions of separability and consistency are satisfied, then one has a unique aggregator. 

\begin{theorem}[Theorem $5.1$ of \cite{soner2011quasi}] For $\mathcal{A}$ a separable class of diffusion coefficients generated by $\mathcal{A}_0$, let $\{ X^a, \hspace{2mm} a \in \mathcal{A} \}$ be a family of $\mathfrak F ^\mathcal P$-progressively measurable processes. Then there exists a unique ($\mathcal{P}$-q.s.) $\mathcal{P}$-aggregator $X$ if and only if $\{X^a, \hspace{2mm} a \in \mathcal{A}\}$ satisfies the consistency condition
$X^a=X^b$, $\mathbb{P}^a$ almost surely on $[0, \theta^{a, b})$ for any $a \in \mathcal{A}_0$ and $b \in \mathcal{A}$. 
\end{theorem}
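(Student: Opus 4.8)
The plan is to treat the two implications separately, disposing of uniqueness and of necessity quickly and concentrating on sufficiency. Uniqueness is immediate: if $X$ and $X'$ both aggregate $\{X^a : a\in\mathcal{A}\}$, then $X=X^a=X'$ $\mathbb{P}_a$-a.s. for every $a$, hence $X=X'$ $\mathcal{P}$-q.s. For necessity, let $X$ be a $\mathcal{P}$-aggregator and fix $a\in\mathcal{A}_0$ and $b\in\mathcal{A}$. The key input is the local-agreement property of the measures $\mathbb{P}_a$: since $\int_0^{\cdot}a_s\,ds$ and $\int_0^{\cdot}b_s\,ds$ coincide on $[0,\theta^{a,b})$, weak uniqueness for \eqref{eq4.4.} forces $\mathbb{P}_a$ and $\mathbb{P}_b$ to agree up to the stopping time $\theta^{a,b}$, i.e. on the $\sigma$-field of the canonical process stopped at $\theta^{a,b}$. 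The events $\{X_s=X^a_s \text{ for all } s<\theta^{a,b}\}$ and $\{X_s=X^b_s \text{ for all } s<\theta^{a,b}\}$ lie in that $\sigma$-field (all processes are progressively measurable) and have full measure under $\mathbb{P}_a$, resp. $\mathbb{P}_b$, hence under both; intersecting them yields $X^a=X^b$ $\mathbb{P}_a$-a.s. on $[0,\theta^{a,b})$, which is exactly the consistency condition.

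For sufficiency I would build the aggregator in two stages. \emph{Stage 1: aggregate over the generating class $\mathcal{A}_0$.} The decisive structural fact is that $\theta^{a,b}$ is a deterministic constant for $a,b\in\mathcal{A}_0$. I would first record the ultrametric-type inequality $\theta^{a,c}\ge\theta^{a,b}\wedge\theta^{b,c}$ (if $\int_0^t a=\int_0^t b$ and $\int_0^t b=\int_0^t c$ below both thresholds, then $\int_0^t a=\int_0^t c$), which makes $\mathcal{A}_0$ into a deterministic tree in which "large disagreement time" means "close". The consistency hypothesis, applied with both arguments in $\mathcal{A}_0$ and in either order, says precisely that $a\mapsto X^a$ is compatible along this tree: $X^a=X^b$ holds both $\mathbb{P}_a$- and $\mathbb{P}_b$-a.s. on $[0,\theta^{a,b})$. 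I would then produce a single process $X^{(0)}$ with $X^{(0)}=X^a$ $\mathbb{P}_a$-a.s. for all $a\in\mathcal{A}_0$ by patching along the branch times: at a deterministic branch time $s$, all the $\mathbb{P}_a$ in a fixed sub-branch coincide on $\mathcal{F}_s$, so one may select a common version up to $s$ and extend it consistently further out along each child branch. Only countably many exceptional null sets accumulate, and they are absorbed into $\mathcal{N}_{\mathcal{P}}$; this is exactly why $X^{(0)}$ comes out $\mathfrak{F}^{\mathcal{P}}$-progressively measurable.

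\emph{Stage 2: lift from $\mathcal{A}_0$ to $\mathcal{A}$.} Given $a=\sum_n\sum_i a_i^n\mathbf{1}_{E_i^n}\mathbf{1}_{[\tau_n,\tau_{n+1})}\in\mathcal{A}$, on each $E_i^n$ the coefficient $a$ agrees, up to a time expressed through the $\tau_n$, with a concatenation built from $a_i^n\in\mathcal{A}_0$; the concatenation property of $\mathcal{A}_0$ guarantees such comparison coefficients remain in $\mathcal{A}_0$. Using the consistency condition to compare $X^a$ with these comparison coefficients, and then Stage 1 to replace the latter by $X^{(0)}$, I would define $X$ by pasting $X^{(0)}$-pieces along the stopping times $\tau_n$ over the partitions $\{E_i^n\}$. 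Here the separability hypotheses are used in full: $E_i^n\in\mathcal{F}_{\tau_n}$ together with each $\tau_n$ taking only countably many values keeps the paste a countable, measurable operation that introduces only polar exceptional sets, while $\inf\{n:\tau_n=\infty\}<\infty$ ensures the procedure terminates.

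The step I expect to be the genuine obstacle is Stage 1. Because the $\mathbb{P}_a$ are mutually singular there is no dominating measure, so one cannot simply "choose a good version"; the constant-disagreement-time assumption is precisely the substitute for a dominating measure, supplying a deterministic combinatorial skeleton along which the measures agree locally so that the patching can be carried out coherently. The delicate bookkeeping within Stage 1 is twofold: (a) showing that the patched process does not, up to a polar set, depend on the choices made along the tree; and (b) verifying progressive measurability with respect to $\mathfrak{F}^{\mathcal{P}}$ rather than merely $\mathbb{F}$ — which is the whole reason for adjoining $\mathcal{N}_{\mathcal{P}}$ to the filtration — and this forces a careful accounting of the null sets throughout.
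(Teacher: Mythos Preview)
The paper does not prove this statement: it is quoted as Theorem~5.1 of \cite{soner2011quasi} and invoked as a black box in the construction of the universal Brownian motion and of aggregated SDE solutions. There is therefore no proof in this paper against which to compare your proposal.

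For what it is worth, your sketch is in the right spirit and tracks the architecture of the original argument in \cite{soner2011quasi}: uniqueness is trivial, necessity follows from the fact that weak uniqueness forces $\mathbb{P}_a=\mathbb{P}_b$ on $\mathcal{F}_{\theta^{a,b}}$, and sufficiency exploits the separable structure. One point in your Stage~1 deserves care: $\mathcal{A}_0$ is not assumed countable, so ``patching along branch times'' while accumulating ``only countably many exceptional null sets'' is not automatic. In \cite{soner2011quasi} the construction does not first build a global aggregator over all of $\mathcal{A}_0$ and then lift; rather it works directly from the representation $a=\sum_n\sum_i a_i^n\mathbf{1}_{E_i^n}\mathbf{1}_{[\tau_n,\tau_{n+1})}$ and defines $X$ piecewise on the (countable) cells $E_i^n\cap\{\tau_n\le t<\tau_{n+1}\}$, using the constant-disagreement-time property to ensure the pieces cohere. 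Your two-stage decomposition is a reasonable reorganisation, but the countability that makes the null-set bookkeeping work ultimately comes from the separability structure of $\mathcal{A}$, not from $\mathcal{A}_0$ itself.
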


As an application of this result, one can construct an aggregator for the Brownian motion.  For this, let us consider a standard Brownian motion $B_t$ (the canonical process under the Wiener measures $\mathbb{P}_0$ as in \cite{soner2011quasi}). For any $\mathbb{P}_a \in \mathcal{P}_W$ and $a \in \bar{\mathcal A}_W(\mathbb P)$  by L\'evy's characterization, we obtain that the following It\^o stochastic integral under $\mathbb{P}_a$ is a $\mathbb {P}_a$- Brownian motion

$$W_t^{\mathbb P_a}:=\int_0^t a_s^{-1/2}dB_s$$

For $\mathcal{A}$ satisfying the consistency condition, the family $\{ W^{\mathbb{P}_a}, a \in \mathcal{A}\}$ admits a unique $\mathcal{P}$-aggregator $W_t$ (see \cite{soner2011quasi}). 
The aggregator is the following stochastic integral  $$W^{\mathcal{P}}_t=\int_0^t a_s^{-1/2}dB_s$$ that is defined quasi-surely, i.e. simultaneously for all the measures $\mathbb{P}_a \in \mathcal{P} $ satisfying the conditions. The construction of such an object is done in Corollary $5.5$ in \cite{soner2011quasi}. Since $ W_t^{\mathcal{P}}$ is a $\mathbb{P}_a$ Brownian motion for every $a \in \mathcal{A}$, we call $W_t^{\mathcal{P}}$- a universal Brownian motion.


When studying $SLE_{\kappa}$ theory, the natural process to be considered is $\sqrt{\kappa}B_t$ with $B_t $ a standard Brownian Motion.
Thus, we will work with $a_s^{1/2}dW^{\mathbb{P}_a}_t$, i.e. $a_s=\kappa$.
 Since in \cite{soner2011quasi}, the process $W_t^{\mathbb{P}_a}$ is defined for all $a$, and is a $\mathbb{P}_a$-standard BM, we will just modify its quadratic variation by constants. 
Thus, in our analysis, for any $\epsilon>0$,  we consider the family of measures 
\begin{align}\label{familyofmeasures}
\mathcal{P}_{\kappa}:= \{\mathbb{P}_{\kappa} :  \kappa \in \mathcal{K}\cap \mathbb{R}_+ \setminus ([0, \epsilon) \cup \{8\}) \}
\end{align}
obtained as the measures $\mathcal{P}$, i.e. as weak solutions to the equation \eqref{eq4.4.} with $a_s=\kappa$ with $\kappa$ in a compact non-trivial interval.
We use the notation $W^{\mathcal{P}_{\kappa}}_t$ to refer to the universal Brownian motion under the family of measures $\mathcal{P}_{\kappa}.$ We use the notation $W^{\mathbb{P}_{\kappa}}_t$ to refer to the $\mathbb{P}_{\kappa}$ -BM under the measure $\mathbb{P}_{\kappa}$.

 In \cite{denis2006theoretical}, it is studied the Black-Scholes model with uncertain volatility, i.e. with a volatility $\sigma \in [\sigma_m, \sigma_M]$ for $\sigma_m$ and $\sigma_M$ some constants. Then, the $\sqrt{\kappa}W_t^{\mathbb{P}_{\kappa}}$ is a closely related process that one can define quasi-surely when varying the parameter $\kappa$ in a certain interval (see also Example  $4.5$ in \cite{soner2011quasi}).



A fundamental result that we use is the aggregate solution to stochastic differential equations.
In the paper, they show how to solve a stochastic differential equation simultaneously under all the measures $\mathbb{P} \in \mathcal{P}$. Specifically, they prove the following result:

\begin{proposition}[Proposition $6.10$ of \cite{soner2011quasi}]\label{propaggregat}
Let $\mathcal{T}$ be the set of all $\mathbb{F}$-stopping times taking values in $\mathbb{R}_+\cup\{\infty\}.$ Let $\mathcal{A}$ satisfy the consistency assumption. Assume that for every $\mathbb{P} \in \mathcal P$ and $\tau \in \mathcal{T},$  the equation
$$X_t=X_0+\int_0^tb(X_s)ds+\int_0^t\sigma_s(X_s)dB_s, $$
has a unique $\mathbb{F}^\mathbb P$ progressively measurable strong solution on the interval $[0, \tau]$. Then there exists $\mathcal P$-q.s. aggregated solution (see Def. \ref{aggregator}) to the equation above, i.e. 
$$X_t=X_0+\int_0^tb(X_s)ds+\int_0^t\sigma_s(X_s)dB_s, \hspace{3mm} t \geq 0$$ has solution simultaneuously under all prrobability measures $\mathbb{P}_{\kappa}.$
\end{proposition}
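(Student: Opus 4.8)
The plan is to deduce this from Theorem~5.1 of \cite{soner2011quasi}, quoted above: a family of $\mathfrak F^{\mathcal P}$-progressively measurable processes admits a unique $\mathcal P$-aggregator exactly when it satisfies the consistency condition there. Thus two things must be checked — that the per-measure strong solutions can be realised as $\mathfrak F^{\mathcal P}$-progressively measurable processes, and that the resulting family is consistent — after which Theorem~5.1 produces the aggregated solution.

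\emph{Measurable versions.} Fix $a\in\mathcal A$ and write $\mathbb P_a\in\mathcal P$ for the associated measure. Applying the hypothesis with $\tau\equiv\infty$ (or along a localising sequence $\tau_n\uparrow\infty$ and patching) gives a unique $\mathbb F^{\mathbb P_a}$-progressively measurable strong solution $X^a$ of
$$X_t=X_0+\int_0^t b(X_s)\,ds+\int_0^t\sigma_s(X_s)\,dB_s.$$
Since $X^a$ is a strong solution it is, up to a $\mathbb P_a$-null modification, a measurable functional of the canonical path $B$: concretely, it is the $\mathbb P_a$-a.s.\ limit of the $\mathbb F^B$-adapted Picard (or Euler) approximations, whose stochastic-integral terms are realised via Karandikar's pathwise integral recalled above. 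Replacing $X^a$ by such an $\mathbb F^B$-progressively measurable version and noting $\mathcal F_t^B\subseteq\mathcal F_t^{\mathcal P}$ for all $t$, we obtain the family $\{X^a:a\in\mathcal A\}$ in the form required by Theorem~5.1, still solving the SDE $\mathbb P_a$-a.s.\ for each $a$.

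\emph{Consistency.} Fix $a\in\mathcal A_0$, $b\in\mathcal A$ and put $\theta:=\theta^{a,b}$. On $[0,\theta)$ the clocks $\int_0^\cdot a_s\,ds$ and $\int_0^\cdot b_s\,ds$ coincide, so $\mathbb P_a$ and $\mathbb P_b$ solve the same stopped martingale problem for the canonical process up to time $\theta$; by uniqueness of that martingale problem — the mechanism underlying the generating-class / disagreement-time set-up of \cite{soner2011quasi} — the two measures agree on the stopped $\sigma$-algebra $\mathcal F_\theta$. Now view $X^b$ under $\mathbb P_a$ and consider the event
$$A:=\Bigl\{\,X^b_t=X^b_0+\int_0^t b(X^b_s)\,ds+\int_0^t\sigma_s(X^b_s)\,dB_s\ \text{ for all }t<\theta\,\Bigr\}.$$
Because the stochastic integral appearing here is the pathwise Karandikar integral — the same functional of the path under every local martingale measure — the event $A$ lies in $\mathcal F_\theta$, and $\mathbb P_b(A)=1$ since $X^b$ solves the SDE under $\mathbb P_b$. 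Hence $\mathbb P_a(A)=1$, i.e.\ $X^b$ solves the SDE under $\mathbb P_a$ on $[0,\theta)$, so strong uniqueness under $\mathbb P_a$ (with $\tau=\theta$, using continuity to pass to the closed interval) gives $X^a=X^b$, $\mathbb P_a$-a.s.\ on $[0,\theta)$. This is exactly the consistency condition, so Theorem~5.1 yields the unique $\mathcal P$-q.s.\ aggregator $X$; as it coincides with the strong solution under each $\mathbb P_\kappa$, it is the desired solution simultaneously under all $\mathbb P_\kappa$.

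The main obstacle is the measurability bookkeeping — upgrading the $\mathbb F^{\mathbb P_a}$-measurable strong solutions to $\mathfrak F^{\mathcal P}$-measurable versions and verifying this meshes with the hypotheses of Theorem~5.1 — together with a clean statement of the fact that $\mathbb P_a$ and $\mathbb P_b$ agree up to the disagreement time; once these are in hand, the argument is a routine combination of pathwise stochastic integration with strong uniqueness.
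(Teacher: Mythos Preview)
The paper does not prove this proposition: it is quoted verbatim as Proposition~6.10 of \cite{soner2011quasi} and used as a black box, with no argument supplied. There is therefore nothing in the present paper to compare your attempt against.

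That said, your sketch is the natural route to the result and matches in outline how \cite{soner2011quasi} itself proceeds: one builds the per-measure solutions, upgrades them to $\mathfrak F^{\mathcal P}$-progressively measurable versions via the already-aggregated stochastic integral, checks the consistency condition using strong uniqueness together with the fact that $\mathbb P_a$ and $\mathbb P_b$ agree on $\mathcal F_{\theta^{a,b}}$, and then invokes the abstract aggregation Theorem~5.1. The places where your write-up is thinnest are exactly the ones you flag at the end --- the measurability upgrade (in \cite{soner2011quasi} this rests on their prior construction of the aggregated It\^o integral rather than an ad hoc Picard/Karandikar argument) and the precise statement that $\mathbb P_a|_{\mathcal F_{\theta}}=\mathbb P_b|_{\mathcal F_{\theta}}$, which in \cite{soner2011quasi} is a separate lemma --- but the architecture is correct.
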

 
In the next section, we use this result for the stochastic differential equation corresponding to $\tilde{K}_s$ that is used in the construction of the $SLE_{\kappa}$ trace.

\section{Heuristics of the quasi-sure construction of the $SLE_{\kappa}$ traces}
Following the parametrization in [Chapter 7, \cite{lawler2008conformally}] we set $\hat{g}_t(z):=\frac{g_t(\sqrt{\kappa}z)}{\sqrt{\kappa}}$ for the maps $g_t(z)$ satisfying the forward Loewner differential equation. Thus, the maps $\hat{g}_t(z)$ satisfy the differential equation
$\partial_t\hat{g}_t(z)=\frac{2/\kappa}{\hat{g}_t(z)-B_t}$.
We work with this paramatrization of the dynamics in order to keep the exposition of the method in line with the approach from [Chapter 7. \cite{lawler2008conformally}].  This formulation is equivalent with the one in which the parameter $\kappa$ is kept in $\sqrt{\kappa}B_t$. To be precise, the Proposition \ref{propaggregat} applies for the process $\sqrt{\kappa}B_t$ and then we reparametrize. We avoid doing the details and work directly with this parametrization in order to keep the exposure neat. 

The main idea is to consider the construction of the aggregated solution to SDE as in Proposition \ref{propaggregat} and applied to the SDE corresponding to the process $\tilde{K}_s$. Furthermore,  we express the derivative of the map $\tilde{h}_t(z_0)$ using the aggregated solution of an SDE and use the lemmas in \cite{lawler2011conformal} to obtain the quasi-sure existence of the $SLE_{\kappa}$ trace. 

We have
$$|\tilde{h}_t'(z_0)|=e^{-\frac{2}{\kappa}t}\exp \left(\frac{4}{\kappa} \int_0^t \frac{\tilde{K}^2_s+1}{\tilde{K}^2_s-1}ds \right)=e^{\frac{2}{\kappa}t}\exp \left(\frac{4}{\kappa}\int_0^t \tilde{N}_s ds \right)\,,$$
where
$$ d \tilde{N}_s=(1-\tilde{N}_s)[-4(\frac{2}{\kappa}+1)\tilde{N}_s+1]ds+2\sqrt{\tilde{N}_s}(1-\tilde{N}_s)d\tilde{B}_s.$$
The SDE for $\tilde{K}_t$ is
$$d\tilde{K}_t=\frac{4}{\kappa}\tilde{K}_tdt+\sqrt{1+\tilde{K}_t^2}d\tilde{B}_t,$$
where $\tilde{B}_t$ is a standard Brownian motion. 


 
%

%
Checking the conditions of Yamada-Watanabe Theorem  for the SDE $\tilde{K}_t$ (see \cite{revuz2013continuous}), we obtain that this stochastic differential equation has a unique strong solution.

 Once we have a unique notion of strong solution for the SDE for $K_t$,  $\mathbb{P}$-a.s., we can construct an aggregated solution for this SDE using Proposition \ref{propaggregat} and then express
 
$$|\tilde{h}_t'(z_0)|=e^{-2t/\kappa}\exp \left(\frac{4}{\kappa} \int_0^t \frac{\tilde{K}^2_s+1}{\tilde{K}^2_s-1}ds \right).$$

 
Using the aggregated solution for the SDE $\tilde{K}_t$ we construct simultaneously the $SLE_{\kappa}$ trace for all parameters $\kappa \in \mathcal{K}\cap \mathbb{R}_+ \setminus ([0, \epsilon) \cup \{8\})$, for any $\epsilon>0$, where $\mathcal{K}$ is a nontrivial compact interval of $\mathbb{R}_+$ by relating the aggregated solution for $\tilde{K}_t$ with the derivative of the backward $SLE$ map $h_t(z)$. Then, having the quantitative estimate simultaneously for all the probability measures $\mathbb{P}_{\kappa}$ in $\mathcal{P}_{\kappa}$ will allow us to construct the $SLE_{\kappa}$ traces quasi-surely.



\color{black}

 \begin{section}{The a.s. existence of the $SLE_{\kappa}$ trace for fixed $\kappa \in \mathbb{R}_+ \setminus \{8\}$}

For the convenience of the reader, we recall the a.s. construction of the $SLE_{\kappa}$ trace for any fixed $\kappa \neq 8$ from \cite{schramm2005basic}. The elements used in the proof of the existence of the $SLE_{\kappa}$ trace for fixed $\kappa$, a.s., are listed in the following. For more details, we refer the reader to \cite{schramm2005basic} and \cite{lawler2008conformally}.

%
%

\begin{subsection}{Estimates for the mean of the derivative for a fixed $\kappa$}

Considering the real and the imaginary part of the backward $SLE$, we have that
\begin{equation}
dX_t =\frac{-2 X_t}{X_t^2+Y_t^2}dt-\sqrt{\kappa}dB_t\,, \hspace{5mm} dY_t=\frac{2Y_t}{X_t^2+Y_t^2}dt\,,
\end{equation}
We consider the time change $\sigma(t)=X_t^2+Y_t^2\,,$ $t=\int_0^{\sigma(t)}\frac{ds}{X_s^2+Y_s^2}\,.$ With the new time, we define the random variables $\tilde{Z}_t=Z_{\sigma (t)} \,,$ $\tilde{X}_t=X_{\sigma(t)}\,,$ and $\tilde{Y}_t=Y_{\sigma(t)}\,.$

The first elements of the proof of the existence of the $SLE_{\kappa}$ trace are the following proposition and the corollary of it. 
\begin{proposition}[Proposition 7.2 in \cite{lawler2011conformal}] \label{prop4.1}
Let $r, b$ such that 
$$r^2-\left(\frac{4}{\kappa}+1\right)r+\frac{2}{\kappa}b=0\,, $$
then
$$ M_t\;:=\; \tilde{Y}_t^{b-(r\kappa/2)}(|\tilde{Z}_t|/\tilde{Y}_t)^{2r}|h_t'(z_0)|^{b}\,,$$
is a martingale. Moreover,
$$\mathbb{P}(|\tilde{h}_{t}'(z_0)| \geq \lambda) \leq \lambda^{-b}(|z_0|/y_0)^{2r}e^{t(r-2b/\kappa)}\,. $$

\end{proposition}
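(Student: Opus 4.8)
The plan is to follow the classical Schramm–Rohde strategy (as in \cite{lawler2008conformally}, \cite{schramm2005basic}) of finding a one-parameter family of local martingales built from the backward-flow coordinates and then extracting a moment bound by optional stopping. First I would verify that $M_t$ is a (local) martingale by applying It\^o's formula to the product $\tilde Y_t^{b-(r\kappa/2)}(|\tilde Z_t|/\tilde Y_t)^{2r}|h_t'(z_0)|^b$, using the time-changed SDEs for $\tilde X_t$ and $\tilde Y_t$ together with the identity $|h_t'(z_0)| = \exp\!\big(\!\int_0^t \,\cdot\, ds\big)$ coming from differentiating the backward Loewner equation in $z$. The point of the algebraic relation $r^2-(\tfrac{4}{\kappa}+1)r+\tfrac{2}{\kappa}b=0$ is precisely that it kills the $dt$-coefficient in the It\^o expansion: after collecting the drift terms from $\tilde Y_t$ (which contributes $b-(r\kappa/2)$ times its drift), from $|\tilde Z_t|/\tilde Y_t$ (a diffusion on a half-line whose generator produces a quadratic-in-$r$ term), and from $|h_t'(z_0)|^b$, the quadratic $r^2-(\tfrac{4}{\kappa}+1)r+\tfrac{2}{\kappa}b$ appears as the coefficient that must vanish. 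So the first real step is the bookkeeping that isolates this coefficient; everything else in the drift cancels by the choice of exponents.

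Next I would pass from "local martingale" to the probability estimate. Let $\tau_\lambda = \inf\{t : |\tilde h_t'(z_0)| \ge \lambda\}$. On the event $\{\tau_\lambda \le t\}$ one has $|h_{\tau_\lambda}'(z_0)|^b \ge \lambda^b$ (assuming the exponents are arranged so $|h'|$ is decreasing along the backward flow, which is the standard normalization; if instead $|h'|\ge 1$ the roles of $\lambda$ and $\lambda^{-1}$ swap and one uses $t\wedge\tau_\lambda$), and the factors $\tilde Y^{b-(r\kappa/2)}$ and $(|\tilde Z|/\tilde Y)^{2r}$ are bounded below by positive quantities controlled by the time-change and the initial data. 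Taking expectations of $M_{t\wedge\tau_\lambda}$, using $\mathbb{E}[M_{t\wedge\tau_\lambda}] = M_0 = y_0^{\,b-(r\kappa/2)}(|z_0|/y_0)^{2r}$, and dividing through, one gets a bound of the form $\mathbb{P}(\tau_\lambda \le t) \le \lambda^{-b}(|z_0|/y_0)^{2r}\,\cdot\,(\text{growth factor in }t)$. The growth factor $e^{t(r-2b/\kappa)}$ comes from controlling $\mathbb{E}[\tilde Y_t^{\,b-(r\kappa/2)} / y_0^{\,b-(r\kappa/2)}]$ (equivalently from a Gronwall/Feynman–Kac estimate on the exponential functional appearing once the $\tilde Y$ drift is accounted for), combined with the fact that under the time change $t$ and real time are comparable only up to this exponential correction; this is where the exponent $r-2b/\kappa$ is produced.

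The main obstacle I expect is the careful handling of the time change $\sigma(t) = X_t^2 + Y_t^2$ and the boundary behaviour as $|\tilde Z_t|/\tilde Y_t$ degenerates — i.e. making sure the local martingale is a genuine martingale (or at least that the optional-stopping inequality goes in the right direction) near the times when the backward flow approaches the real axis, and that the integrability needed to replace $M_{t\wedge\tau_\lambda}$ by its expectation holds. One standard fix is to localize further with a stopping time bounding $\tilde Y_t$ away from $0$ and $\infty$ and then let that localization relax using monotone/dominated convergence together with the explicit form of $M_t$; the signs of the exponents $b-(r\kappa/2)$ and $2r$ for the relevant choice of $(r,b)$ guarantee the one-sided bound survives the limit. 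The rest — the precise It\^o computation and the Gronwall estimate for the $\tilde Y$-functional — is routine and I would not grind through it here.
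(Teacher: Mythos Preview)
Your outline for the martingale part is essentially the paper's approach: apply It\^o's formula and observe that the algebraic relation $r^2-(\tfrac{4}{\kappa}+1)r+\tfrac{2}{\kappa}b=0$ kills the drift. The paper streamlines the computation by first rewriting everything in terms of the single process $\tilde N_t=\tilde K_t/(1+\tilde K_t)\in[0,1)$, so that $|\tilde h_t'(z_0)|=e^{-2t/\kappa}\exp\bigl(\tfrac{4}{\kappa}\int_0^t\tilde N_s\,ds\bigr)$ and $M_t=y_0^{\,b-(r\kappa/2)}e^{-rt}(1-\tilde N_t)^{-r}\exp\bigl(\tfrac{4b}{\kappa}\int_0^t\tilde N_s\,ds\bigr)$; It\^o then gives $dM_t=2r\sqrt{\tilde N_t}\,M_t\,d\tilde W_t$ in one line.

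Where you diverge from the paper is in extracting the probability bound. You propose optional stopping at $\tau_\lambda$ and then worry (correctly) about boundary behaviour, integrability, and the sign-dependent control of the $\tilde Y$ factor at the random time $\tau_\lambda$. The paper bypasses all of this: in the time-changed coordinates $\tilde Y_t=y_0e^{2t/\kappa}$ is \emph{deterministic}, so the factor $\tilde Y_t^{\,b-(r\kappa/2)}$ can be pulled out of $\mathbb E[M_t]$ explicitly, and since $(|\tilde Z_t|/\tilde Y_t)^{2r}\ge 1$ one gets directly
\[
\mathbb E\bigl[|\tilde h_t'(z_0)|^b\bigr]\ \le\ (|z_0|/y_0)^{2r}\,e^{t(r-2b/\kappa)},
\]
and the stated bound is just Markov's inequality. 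In particular, the exponential factor $e^{t(r-2b/\kappa)}$ is not a Gronwall/Feynman--Kac estimate but simply the constant $\tilde Y_t^{\,b-(r\kappa/2)}/y_0^{\,b-(r\kappa/2)}$. Your route would work, but it introduces stopping-time and localisation issues that the paper's argument never has to face.
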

%
%

\begin{corollary}[Corollary $7.3$ in \cite{lawler2011conformal}] \label{coro}
For every $0 \leq r \leq \frac{4}{\kappa}+1\,,$ there is a finite $c=c(\kappa,r)$ such that for all $0 \leq t \leq 1$, $0 \leq y_0 \leq 1\,,$ $e \leq \lambda \leq y_0^{-1}\,,$ we have that

$$\mathbb{P}(|h_{t}'(z_0)| \geq \lambda) \leq \lambda^{-b}(|z_0|/y_0)^{2r}\delta(y_0, \lambda)\,, $$
where $b=\frac{[(\frac{4}{\kappa}+1)r-r^2]\kappa}{2} \geq 0$ and

\[
    \delta(y_0, \lambda)= 
\begin{cases}
    \lambda^{(r\kappa/2)-b},& \text{if }   r < \frac{2b}{\kappa}\,,\\
    -\log (\lambda y_0), & \text{if} \hspace{1mm} r = \frac{2b}{\kappa}\,,\\
    y_0^{b-(r\kappa/2)}, & \text{if }  r > \frac{2b}{\kappa}\,.
\end{cases}
\]
\end{corollary}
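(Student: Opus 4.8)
The plan is to derive the stated bound directly from Proposition \ref{prop4.1} by optimizing the free parameter $b$ (equivalently the pair $(r,b)$ tied together by the quadratic relation) against the exponential factor $e^{t(r-2b/\kappa)}$, using the hypotheses $0\le t\le 1$, $0\le y_0\le 1$, and $e\le\lambda\le y_0^{-1}$ to control the remaining terms. First I would record that for $0\le r\le \frac4\kappa+1$ the quadratic $r^2-(\frac4\kappa+1)r+\frac2\kappa b=0$ forces $b=\frac{[(\frac4\kappa+1)r-r^2]\kappa}{2}\ge 0$, so the exponent $-b$ on $\lambda$ is nonpositive and Proposition \ref{prop4.1} applies with this $b$. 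This already gives
$$\mathbb{P}(|\tilde h_t'(z_0)|\ge\lambda)\le \lambda^{-b}(|z_0|/y_0)^{2r}e^{t(r-2b/\kappa)}.$$
The point of the corollary is to replace the $t$-dependent exponential by the $t$-free quantity $\delta(y_0,\lambda)$; since $0\le t\le 1$, it suffices to bound $e^{t(r-2b/\kappa)}$ by a constant times $\delta(y_0,\lambda)$ for all such $t$.

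Next I would split into the three cases according to the sign of $r-\tfrac{2b}{\kappa}$. If $r<\tfrac{2b}{\kappa}$, then $r-\tfrac{2b}{\kappa}<0$, so $e^{t(r-2b/\kappa)}\le 1$ for $t\ge 0$; but we want the sharper $\lambda^{(r\kappa/2)-b}$, so here I would instead keep the martingale bound in its sharp form and re-optimize — more precisely, use that $M_t$ is a martingale and apply the optional stopping/maximal argument of [LC, Prop. 7.2] at the stopping time where $|\tilde h_t'(z_0)|$ first reaches $\lambda$, which replaces the factor $e^{t(\cdots)}$ by a power of $\lambda$ coming from the $|\tilde Z_t|/\tilde Y_t$ and $\tilde Y_t$ terms evaluated at that stopping time, yielding the exponent $(r\kappa/2)-b$. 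If $r=\tfrac{2b}{\kappa}$, the exponential is $\equiv 1$ and the logarithmic correction $-\log(\lambda y_0)$ (which is $\ge 0$ since $\lambda y_0\le 1$) absorbs the boundary/maximal term. If $r>\tfrac{2b}{\kappa}$, then $e^{t(r-2b/\kappa)}\le e^{r-2b/\kappa}$, a constant depending only on $(\kappa,r)$, which I would fold into $c(\kappa,r)$, and the residual $y_0^{b-(r\kappa/2)}$ (note $b-\tfrac{r\kappa}{2}\le 0$, and $y_0\le 1$, so this is $\ge 1$) comes again from estimating $\tilde Y_t$ at the relevant stopping time. Throughout, the hypothesis $e\le\lambda$ guarantees $\log\lambda\ge 1$ so that logarithmic and polynomial factors can be compared cleanly, and $\lambda\le y_0^{-1}$ keeps $-\log(\lambda y_0)\ge 0$ and keeps the various powers of $y_0$ and $\lambda$ in the regime where the claimed $\delta$ dominates.

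The main obstacle is the first case $r<\tfrac{2b}{\kappa}$: there the crude bound $e^{t(r-2b/\kappa)}\le 1$ is not good enough to produce the factor $\lambda^{(r\kappa/2)-b}$, so one genuinely has to go back to the martingale $M_t=\tilde Y_t^{b-(r\kappa/2)}(|\tilde Z_t|/\tilde Y_t)^{2r}|h_t'(z_0)|^b$ and run a Doob/optional-stopping argument at $T_\lambda=\inf\{t:|\tilde h_t'(z_0)|\ge\lambda\}$, carefully tracking how $\tilde Y_{T_\lambda}$ and $|\tilde Z_{T_\lambda}|/\tilde Y_{T_\lambda}$ behave — this is exactly the computation in the proof of [LC, Cor. 7.3], and reproducing it requires the a priori pointwise control on $\tilde Y_t$ in the time-changed coordinates. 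The other two cases are essentially bookkeeping: bounding a continuous function of $t$ on $[0,1]$ by its maximum and checking the signs of the exponents using $0\le y_0\le 1$ and $e\le\lambda\le y_0^{-1}$. Finally I would collect the three constants into a single $c=c(\kappa,r)$ and remark that the restriction $0\le r\le\frac4\kappa+1$ is precisely what makes $b\ge 0$, so that the exponent $-b$ on $\lambda$ never spoils the estimate.
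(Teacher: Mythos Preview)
Your proposal has a genuine gap rooted in conflating the two time parametrizations. Proposition~\ref{prop4.1} bounds $\mathbb{P}(|\tilde h_t'(z_0)|\ge\lambda)$ where $\tilde h_t=h_{\sigma(t)}$ lives in the \emph{time-changed} clock, whereas the corollary concerns $|h_t'(z_0)|$ in the \emph{original} time $t\in[0,1]$. The hypothesis $t\le 1$ in the corollary does not transfer to the time-changed parameter: original time $1$ corresponds to a time-changed time of order $T\asymp -\tfrac{\kappa}{2}\log y_0$, which is unbounded as $y_0\to 0$. Thus your bound $e^{t(r-2b/\kappa)}\le e^{r-2b/\kappa}$ in the case $r>\tfrac{2b}{\kappa}$ is false in the relevant variable, and your optional-stopping sketch in the case $r<\tfrac{2b}{\kappa}$ never engages with the correct time scale either. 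The factor $y_0^{b-r\kappa/2}$ in $\delta$ cannot be absorbed into a constant $c(\kappa,r)$, so the ``bookkeeping'' you describe does not close.

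The paper's argument is structurally different. First, from $Y_t\le\sqrt{4/\kappa+1}$ for $t\le 1$ and $\tilde Y_s=y_0 e^{2s/\kappa}$, the original-time event $\{|h_t'(z_0)|\ge\lambda\}$ is contained in $\{\sup_{0\le s\le T}|\tilde h_s'(z_0)|\ge\lambda\}$ with $T=\tfrac{\kappa}{2}\bigl(\log\sqrt{4/\kappa+1}-\log y_0\bigr)$. Second, since $|\tilde h_{s+u}'|\le e^{2u/\kappa}|\tilde h_s'|$, one discretizes and union-bounds over integer times $j\in[0,T]$. Third, Schwarz--Pick gives $|\tilde h_s'|\le e^{2s/\kappa}$, so $\{|\tilde h_j'|\ge\lambda\}$ is empty for $j<\tfrac{\kappa}{2}\log\lambda$, and the sum runs only over $j\in\bigl[\tfrac{\kappa}{2}\log\lambda,\,T\bigr]$. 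Applying Proposition~\ref{prop4.1} termwise yields a geometric series $\sum_j e^{j(r-2b/\kappa)}$, and the three cases of $\delta$ are precisely the first term, the number of terms, and the last term of that sum according as the ratio is $<1$, $=1$, or $>1$. The powers $\lambda^{r\kappa/2-b}$, $-\log(\lambda y_0)$, and $y_0^{b-r\kappa/2}$ come from the endpoints $\tfrac{\kappa}{2}\log\lambda$ and $T\sim -\tfrac{\kappa}{2}\log y_0$ of the summation range --- not from any optional-stopping computation on the martingale $M_t$.
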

%
%

\subsection{Existence of the trace for fixed $\kappa$}

\begin{proposition}[Proposition $4.33$ in \cite{lawler2011conformal}]
Suppose that $g_t$ is a Loewner chain with driving function $U_t$ and assume that there exist a sequence of positive numbers $r_j \to 0$ and a constant $c$ such that
\begin{align*}
|\hat{f}_{k2^{-2j}}'(2^{-j}i)| \leq 2^j r_j\,, k=0,1, \ldots, 2^{2j}-1\,,\\
|U_{t+s}-U_t| \leq c\sqrt{j}2^{-j}\,, 0 \leq t \leq 1, 0 \leq s \leq 2^{-2j}\,.
\end{align*}
and
\begin{align*}
\lim_{j \to \infty} \sqrt{j}/ \log r_j =0\,. 
\end{align*}
Then $V(y, t) := \hat{f}_{t}(iy)$ is continuous on $[0,1] \times [0,1] \,.$
\end{proposition}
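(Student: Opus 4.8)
The plan is to reduce the continuity of $V(y,t) = \hat f_t(iy)$ on $[0,1]^2$ to a Kolmogorov-type / dyadic-approximation argument, following the scheme of Rohde--Schramm as presented in \cite{lawler2008conformally}. The key observation is that the derivative bound $|\hat f_{k2^{-2j}}'(2^{-j}i)| \leq 2^j r_j$ controls how far the image point $\hat f_t(iy)$ can move as $y$ ranges over the vertical segment from $2^{-j}i$ down toward $0$, while the modulus-of-continuity bound on $U_t$ controls the time-direction oscillation. First I would recall the standard distortion estimate for the backward/centered Loewner maps: for a conformal map $\hat f_t : \mathbb H \to H_t$ with the hydrodynamic normalization, one has $|\hat f_t(iy) - \hat f_t(iy')| \leq C \, y \, |\hat f_t'(iy)|$ for $y' \in [y/2, y]$ (by the Koebe distortion theorem applied on a disk of radius comparable to $y$ inside $\mathbb H$), together with the a priori bound $|\hat f_t'(iy)| \leq |\hat f_t'(iy'')|$-type monotonicity in $y$ coming from $\im \hat f_t$ being harmonic. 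Summing a geometric chain $y = 2^{-j}, 2^{-j-1}, 2^{-j-2}, \dots$ then gives
$$
|\hat f_{k2^{-2j}}(iy) - \hat f_{k2^{-2j}}(0)| \;\lesssim\; \sum_{\ell \geq j} 2^{-\ell} \cdot 2^{\ell} r_\ell \;=\; \sum_{\ell \geq j} r_\ell,
$$
so the hypothesis $r_j \to 0$ (summably, after possibly passing to a subsequence, which the hypotheses permit since $r_j \to 0$ along $r_j = o(\text{something})$) forces $V(\cdot, k2^{-2j})$ to extend continuously to $y = 0$ uniformly in $k$.

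Next I would handle the time direction. For a fixed small $y$ and times $t, t'$ with $|t - t'| \leq 2^{-2j}$, one estimates $|\hat f_t(iy) - \hat f_{t'}(iy)|$ by combining two effects: the difference in driving function $|U_t - U_{t'}| \leq c\sqrt j \, 2^{-j}$, and the flow of the Loewner ODE over a time interval of length $2^{-2j}$, both weighted by the derivative $|\hat f'|$ which is $\leq 2^j r_j$ at scale $2^{-j}$. The upshot (this is the content of the Rohde--Schramm interpolation lemma) is that on each dyadic block $[k2^{-2j}, (k+1)2^{-2j}] \times \{2^{-j}\}$ the oscillation of $V$ is at most a constant times $\sqrt j \, r_j$, which tends to $0$. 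Combining the spatial chain estimate with this block estimate, a standard three-epsilon argument — approximate $(y,t)$ by the nearest dyadic grid point $(2^{-j}, k2^{-2j})$, control $|V(y,t) - V(2^{-j}, k2^{-2j})|$ by the two bounds, and use equicontinuity of the finitely many ``level-$j$'' approximants — yields that $V$ is the uniform limit of continuous functions on $[0,1]^2$, hence continuous. The condition $\lim_{j\to\infty} \sqrt j / \log r_j = 0$ is exactly what makes the error terms summable/convergent when one optimizes the dyadic scale against the logarithmic losses coming from the Koebe distortion iterated $\log(1/y)$ times.

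The main obstacle, and the step I would spend the most care on, is the spatial distortion chain: one must show that $|\hat f_t'(i 2^{-\ell})|$ is controlled in terms of the hypothesized bound $|\hat f_t'(i 2^{-j})| \leq 2^j r_j$ for $\ell$ between $j$ and $\log_2(1/y)$, i.e. one needs a growth bound on $|\hat f_t'|$ as the scale shrinks. This is where the factor $\log r_j$ enters: by the Koebe distortion theorem, passing from scale $2^{-\ell}$ to scale $2^{-\ell-1}$ can only multiply the derivative by a bounded factor, so over $k$ halvings the derivative grows at most like $C^k$, and matching this against the required smallness forces the scale at which one stops to be $\sim (\log r_j)/(\log C)$ halvings below $2^{-j}$ — giving an effective terminal scale $y_j \sim 2^{-j} r_j^{1/\log C}$. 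One then argues that $V(y,t)$ for $y \leq y_j$ is within $o(1)$ of $V(y_j, t)$, and the hypothesis $\sqrt j/\log r_j \to 0$ guarantees the time-oscillation bound $\sqrt j \, 2^{-j}$ does not overwhelm this terminal scale. Assembling these pieces carefully — in particular tracking that all the ``constants'' $c(\kappa, r)$ are uniform in $k$ and $t$ — completes the proof; the rest is bookkeeping with the triangle inequality.
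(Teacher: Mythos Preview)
The paper does not prove this proposition at all: it is quoted verbatim as Proposition~4.33 from \cite{lawler2011conformal} and used as a black box in the subsequent argument, so there is no ``paper's own proof'' to compare against. Your sketch is in fact a reasonable outline of the standard Rohde--Schramm/Lawler argument that lives in the cited reference, with the right two ingredients (Koebe distortion for the $y$-direction chain, driver modulus of continuity plus the Loewner flow estimate for the $t$-direction), so in that sense you have reconstructed the intended proof rather than deviated from it.

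One caution on your sketch itself: the step where you write $\sum_{\ell \geq j} r_\ell$ and then say ``summably, after possibly passing to a subsequence'' is not quite how the actual argument goes. The hypotheses only give $r_j \to 0$, not $\sum r_j < \infty$, and you cannot pass to a subsequence because you need the bound at \emph{every} dyadic level to run the chain. In the genuine proof one does not sum $r_\ell$ directly; instead one uses the distortion theorem to propagate the single-scale bound $|\hat f'_t(i2^{-j})| \leq 2^j r_j$ downward by at most a geometric factor per halving, stops after $O(|\log r_j|)$ halvings (this is exactly where the hypothesis $\sqrt{j}/\log r_j \to 0$ is spent, as you correctly identify later), and shows the resulting terminal scale is fine enough that the $t$-oscillation $c\sqrt{j}\,2^{-j}$ is dominated. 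So your second paragraph has the right mechanism but your first paragraph's telescoping sum is the wrong bookkeeping; the two should be merged into a single argument at the scale $y_j \asymp 2^{-j} r_j^{\,c}$ rather than treated as separate spatial and temporal estimates.
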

Combining the previous results, one obtains Theorem \ref{Rohdeschramm} that we repeat for convenience. 
\begin{theorem}[Theorem $4.1$ of \cite{schramm2005basic}]
Let $(K_t)_{t \geq 0}$ be a $SLE_{\kappa}$ for $\kappa \neq 8\,.$  Then, $\hat{g}_{t}^{-1}(z)= g_{t}^{-1}(z+\sqrt{\kappa}B_t) :\mathbb{H} \mapsto H_t$ extends continuously to $\bar{\mathbb{H}}$ for all $t \geq 0,$ almost surely. Moreover, $\gamma_t$ is continuous and generates $(K_t)_{t \geq 0}$ almost surely.
\end{theorem}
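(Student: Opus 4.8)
The plan is to assemble the three ingredients recalled just above — the tail bound of Proposition~\ref{prop4.1}, the refined derivative estimate of Corollary~\ref{coro}, and the deterministic continuity criterion (Proposition $4.33$ in \cite{lawler2011conformal}) — into a Borel--Cantelli argument, following \cite{schramm2005basic} and \cite{lawler2011conformal}. First I would transfer the estimates from the backward flow to the forward PDE flow: by Lemma 5.5 of \cite{kemppainen2017schramm}, for each fixed $t$ the map $z\mapsto f_t(z+\sqrt{\kappa}B_t)-\sqrt{\kappa}B_t=\hat f_t(z)$ has the same law as $h_t(z)$, so $|\hat f_t'(iy)|$ and $|h_t'(iy)|$ are equidistributed and the bounds of Corollary~\ref{coro} apply verbatim to $|\hat f_{k2^{-2j}}'(2^{-j}i)|$ at each dyadic time.

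Next comes the Borel--Cantelli step. Fix $j$, apply Corollary~\ref{coro} with $z_0=2^{-j}i$ (so $y_0=2^{-j}$, hence $|z_0|/y_0=1$) and $\lambda=\lambda_j:=2^j r_j$ for a polynomially decaying sequence $r_j=2^{-\alpha j}$, $0<\alpha<1$, with a free parameter $r\in[0,\tfrac{4}{\kappa}+1]$ and $b=b(r,\kappa)=\tfrac{\kappa}{2}[(\tfrac{4}{\kappa}+1)r-r^2]$. Taking a union bound over the $2^{2j}$ times $k2^{-2j}$, $0\le k<2^{2j}$, and then over $j$, one needs
\[
\sum_{j} 2^{2j}\,\lambda_j^{-b}\,\delta\!\left(2^{-j},\lambda_j\right) < \infty .
\]
Optimizing $r$ (equivalently $b$) and the rate $\alpha$ so that the net power of $2^j$ is negative yields, almost surely, a random $j_0$ beyond which $|\hat f_{k2^{-2j}}'(2^{-j}i)|\le 2^j r_j$ for all $k$; since $\log r_j\sim-\alpha j\log 2$ we automatically get $\sqrt{j}/\log r_j\to 0$, the third hypothesis of the criterion.

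The remaining hypothesis is driver regularity: $U_t=\sqrt{\kappa}B_t$, and L\'evy's modulus of continuity for Brownian motion gives, a.s., a constant $c$ with $|U_{t+s}-U_t|\le c\sqrt{j}\,2^{-j}$ for all $0\le t\le 1$, $0\le s\le 2^{-2j}$, once $j$ is large. Thus on an event of full measure all three hypotheses of Proposition $4.33$ of \cite{lawler2011conformal} hold, so $V(y,t)=\hat f_t(iy)$ is continuous on $[0,1]\times[0,1]$; since $\hat g_t^{-1}(iy)=\hat f_t(iy)+\sqrt{\kappa}B_t$, this gives existence and continuity of $\gamma_t=\lim_{y\to0}\hat g_t^{-1}(iy)$ on $[0,1]$, and a scaling/Markov-property argument extends this to all $t\ge0$. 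Finally, that $\gamma$ generates $(K_t)_{t\ge0}$ follows because continuity up to $\bar{\mathbb H}$ of $\hat g_t^{-1}$ forces $\partial\mathbb H$ to be mapped onto $\partial K_t$, so $H_t$ is the unbounded component of $\mathbb H\setminus\gamma[0,t]$.

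The main obstacle is precisely the parameter optimization in the Borel--Cantelli step: one must choose $r$ (hence $b$), track the three regimes in the definition of $\delta(y_0,\lambda)$, and pick the decay rate $\alpha$ of $r_j$ so that $2^{2j}\lambda_j^{-b}\delta(2^{-j},\lambda_j)$ is summable \emph{while} keeping $r_j\to0$ — the balance of these exponents is delicate and degenerates exactly at $\kappa=8$, which is why that value must be excluded (and is treated separately in \cite{lawler2011conformal}). Everything else is either a direct citation or a routine application of the Brownian modulus of continuity.
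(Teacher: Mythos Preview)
Your proposal is correct and follows precisely the approach the paper outlines: the paper does not give a detailed proof of this statement but merely lists the three ingredients (Proposition~\ref{prop4.1}, Corollary~\ref{coro}, and Proposition~$4.33$ of \cite{lawler2011conformal}) and says ``combining the previous results, one obtains Theorem~\ref{Rohdeschramm}''. Your sketch of how to combine them --- transfer from $h_t$ to $\hat f_t$ via Lemma~5.5 of \cite{kemppainen2017schramm}, Borel--Cantelli over the dyadic grid using Corollary~\ref{coro}, L\'evy's modulus for the driver, then scaling to extend beyond $[0,1]$ --- is exactly the standard Rohde--Schramm argument and also matches the structure of the paper's own quasi-sure version in Section~5, where the specific choice $r=\tfrac{2}{\kappa}+\tfrac{1}{4}$ (giving $2b-r\kappa/2=\tfrac{4}{\kappa}+1+\tfrac{\kappa}{16}>2$ for $\kappa\neq 8$) is made explicit.
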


\end{subsection}

\color{black}

\end{section}

\section{Quasi-sure existence of the $SLE_{\kappa}$ trace -defining the $SLE_{\kappa}$ trace simultaneously for all $\kappa \in \mathcal{K}\cap \mathbb{R}_+ \setminus ([0, \epsilon) \cup \{8\})$}

In this section, we construct the $SLE_{\kappa}$ quasi-surely, i.e. we construct the $SLE_{\kappa}$ traces simultaneously for the family of measures $\mathbb{P}_{\kappa}$, for $\kappa \in \mathcal{K}\cap \mathbb{R}_+ \setminus ([0, \epsilon) \cup \{8\}).$ We obtain in this manner an \textit{aggregator} of the SLE traces.

\subsection{Estimates on the moments of the derivatives for many $\kappa$ using aggregation of solutions of a SDE}

We consider the family of measures $\mathcal{P}_{\kappa}$ as in \eqref{familyofmeasures} and the aggregator of the Brownian motion under this family of measures $W_t^{\mathcal{P}_{\kappa}}$ described in the previous section. In order to define the $SLE_{\kappa}$ traces simultaneously for the family of measures $\mathcal{P}_{\kappa}$ we study the Loewner differential equation driven by a simple modification of the aggregator $W_t^{\mathcal{P}_{\kappa}}$. Specifically, we consider $\sqrt{\kappa}W_t^{\mathbb{P}_{\kappa}}$ as a driver for the backward Loewner differential equation, for all the measures $\mathbb{P}_{\kappa}$, with $\kappa\in \mathcal{K}\cap \mathbb{R}_+ \setminus ([0, \epsilon) \cup \{8\})$, for any $\epsilon>0$. One can think about this also as the forward Loewner differential equation driven by the canonical process under the family of measures $\mathcal{P}_{\kappa}$.

Investigating the real and the imaginary part of the backward $SLE$, we have that under each $\mathbb{P}_{\kappa} \in \mathcal{P}_{\kappa}$, we have that 
\begin{equation}
dX_t =\frac{-2 X_t}{X_t^2+Y_t^2}dt-\sqrt{\kappa}dW_t^{\mathbb{P}_{\kappa}}\,, \hspace{5mm} dY_t=\frac{2Y_t}{X_t^2+Y_t^2}dt\,,
\end{equation}
 We recall from the heuristics that we follow the parametrization in [Chapter 7, \cite{lawler2008conformally}] and we set $\hat{g}_t(z):=\frac{g_t(\sqrt{\kappa}z)}{\sqrt{\kappa}}$ for the maps $g_t(z)$ satisfying the forward Loewner differential equation. Thus, the maps $\hat{g}_t(z)$ satisfy the differential equation
$\partial_t\hat{g}_t(z)=\frac{2/\kappa}{\hat{g}_t(z)-B_t}$. 
We recall also that this choice is for the convenience of the analysis and does not change the aggregation result.
We consider the time change $\sigma(t)=X_t^2+Y_t^2\,,$ $t=\int_0^{\sigma(t)}\frac{ds}{X_s^2+Y_s^2}\,.$ With the new time, we define the random variables $\tilde{Z}_t=Z_{\sigma (t)} \,,$ $\tilde{X}_t=X_{\sigma(t)}\,,$ and $\tilde{Y}_t=Y_{\sigma(t)}\,.$
Furthermore, let us consider $\sqrt{\kappa}\tilde{W}_t^{\mathbb{P}_{\kappa}}:=\frac{\sqrt{\kappa}dW^{\mathbb{P}_{\kappa}}_t}{\sqrt{X_t^2+Y_t^2}}$.  Using the L\'evy's characterization of Brownian motion (for every $\mathbb{P}_{\kappa})$ we deduce that the random time changed Brownian motion is also a Brownian motion (in the random time defined above) for all $\mathbb{P}_{\kappa}$.


Using Yamada-Watanabe Theorem (see \cite{revuz2013continuous}), the following SDE for $\tilde{W}_t$ being a standard Brownian motion has a unique strong solution 
$$d\tilde{K}_t=\frac{4}{\kappa}\tilde{K}_tdt+\sqrt{1+\tilde{K}_t^2}d\tilde{W}_t. $$
Next, we use Proposition \ref{propaggregat} in order to obtain the aggregated solution of the SDE for $\tilde{K_t}$:
$$d\tilde{K}_t=\frac{4}{\kappa}\tilde{K}_tdt+\sqrt{1+\tilde{K}_t^2}d\tilde{W}_t^{\mathcal{P}_{\kappa}},$$ 
where $\tilde{W}_t^{\mathcal{P}_{\kappa}}$ is the aggregator of the Brownian motion for the family of measures $\mathbb{P}_{\kappa}$.
Specifically, for any measure $\mathbb{P}_{\kappa}$ we have 
\begin{equation}\label{SDE K_t}
d\tilde{K}_t=\frac{4}{\kappa}\tilde{K}_tdt+\sqrt{1+\tilde{K}_t^2}d\tilde{W}_t^{\mathbb{P}_{\kappa}},
\end{equation}
where $\tilde{W}_t^{\mathbb{P}_{\kappa}}$ is a standard $\mathbb{P}_{\kappa}$-Brownian motion.

In order to prove similar estimates that were obtained  in the previous section for fixed $\kappa$ simultaneously for all $\kappa$, we use the aggregated solution and relate it with the derivative of the map, via 
\begin{equation}\label{SDE to map}
|\tilde{h}_t'(z_0)|=e^{-\frac{2t}{\kappa}}\exp \left( \frac{4}{\kappa} \int_0^t \frac{\tilde{K}^2_s+1}{\tilde{K}^2_s-1}ds \right).
\end{equation}
Using the aggregated solution, we obtain a version of Proposition \ref{7.2} using the family of measures $\mathbb{P}_{\kappa}$. In this manner we can construct the trace by obtaining an estimate for the derivative of the conformal maps $h_t(z)$ similar to the one in Proposition\label{7.2} simultaneously for all $\kappa \in \mathcal{K}\cap \mathbb{R}_+ \setminus ([0, \epsilon) \cup \{8\})$ using the aggregated solution. In this manner we obtain the q.s. existence of SLE traces simultaneously for all $\kappa \in \mathcal{K}\cap \mathbb{R}_+ \setminus ([0, \epsilon) \cup \{8\})$, for any $\epsilon>0$.

First, we prove a version of Proposition \ref{prop4.1} for the family of measures $\mathbb{P}_{\kappa} \in \mathcal{P}_{\kappa}.$

\begin{proposition} \label{7.2qs}
Let $r, b$ such that 
$$r^2-\left(\frac{4}{\kappa}+1\right)r+\frac{2b}{\kappa}=0\,, $$
then
$$ M_t\;:=\; \tilde{Y}_t^{b-(\frac{2r}{\kappa})}(|\tilde{Z}_t|/\tilde{Y}_t)^{2r}|h_t'(z_0)|^{b}\,,$$
is a martingale under any measure $\mathbb{P}_{\kappa} \in \mathcal{P}_{\kappa}$ as in \eqref{familyofmeasures}. Moreover, for any measure $\mathbb{P}_{\kappa}$, we have
$$\mathbb{P}_{\kappa}(|\tilde{h}_{t}'(z_0)| \geq \lambda) \leq \lambda^{-b}(|z_0|/y_0)^{2r}e^{t(r-\frac{2b}{\kappa})}\,. $$

\end{proposition}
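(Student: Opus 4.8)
The strategy is to port the fixed-$\kappa$ argument of Proposition \ref{prop4.1} (Proposition 7.2 of \cite{lawler2011conformal}) measure-by-measure, exploiting that for each individual $\mathbb{P}_{\kappa} \in \mathcal{P}_{\kappa}$ the process $W_t^{\mathbb{P}_{\kappa}}$ is a genuine standard Brownian motion and the time-changed driver $\tilde{W}_t^{\mathbb{P}_{\kappa}}$ is again a standard Brownian motion by L\'evy's characterization. First I would fix an arbitrary $\mathbb{P}_{\kappa} \in \mathcal{P}_{\kappa}$ and observe that, under this single measure, the backward Loewner flow driven by $\sqrt{\kappa}W_t^{\mathbb{P}_{\kappa}}$ has exactly the same law as the classical backward $SLE_{\kappa}$ flow; hence the computation that produces the local martingale $M_t$ goes through verbatim. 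Concretely, I would apply It\^o's formula to $\tilde{Y}_t^{\,b-(2r/\kappa)}(|\tilde{Z}_t|/\tilde{Y}_t)^{2r}|h_t'(z_0)|^{b}$ using the SDEs for $\tilde{X}_t,\tilde{Y}_t$ (and the associated equation $\partial_t\log|h_t'(z_0)| = \re\big(-2/(h_t-U_t)^2\big)$ in the time-changed variables), and check that the drift term vanishes precisely because $r,b$ satisfy the indicial relation $r^2-(\tfrac{4}{\kappa}+1)r+\tfrac{2b}{\kappa}=0$. This is the same algebraic cancellation as in the cited proposition, just written with the $\hat g_t$-parametrization $\partial_t\hat g_t(z)=(2/\kappa)/(\hat g_t(z)-B_t)$, so the constants $2/\kappa$ appear where $2$ appears in the classical statement.

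Next I would upgrade the local martingale $M_t$ to a true martingale under $\mathbb{P}_{\kappa}$. Here I would follow \cite{lawler2011conformal}: one shows $M_t$ is bounded on the relevant range of parameters (using $\tilde{Y}_t \le |\tilde Z_t|$, $|h_t'(z_0)|\le$ a deterministic bound on $[0,t]$ coming from the hydrodynamic normalization, and the sign constraints $b\ge 0$, $0\le r\le \tfrac{4}{\kappa}+1$), or alternatively invokes a standard localization-plus-uniform-integrability argument; since $\kappa$ ranges over the compact interval $\mathcal{K}\cap\mathbb{R}_+\setminus([0,\epsilon)\cup\{8\})$, all the constants in these bounds can be taken uniform in $\kappa$, which is what makes the family well-behaved, though for the present statement I only need it for each fixed $\mathbb{P}_{\kappa}$. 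With $M_t$ a martingale and $M_0 = (|z_0|/y_0)^{2r} y_0^{\,b-(2r/\kappa)}$, the tail bound follows by a Markov/optional-stopping estimate: on the event $\{|\tilde h_t'(z_0)|\ge\lambda\}$ one lower-bounds $M_t$ by $\lambda^{b}$ times the surviving factors and uses $\tilde Y_t\le|\tilde Z_t|$ together with the deterministic growth of $e^{t(r-2b/\kappa)}$ to collect the stated right-hand side $\lambda^{-b}(|z_0|/y_0)^{2r}e^{t(r-2b/\kappa)}$. Because the aggregator $\tilde W_t^{\mathcal{P}_{\kappa}}$ agrees with $\tilde W_t^{\mathbb{P}_{\kappa}}$, $\mathbb{P}_{\kappa}$-a.s., and hence the aggregated solution $\tilde K_t$ of \eqref{SDE K_t} agrees with the $\mathbb{P}_{\kappa}$-strong solution, the quantity $|\tilde h_t'(z_0)|$ defined through \eqref{SDE to map} is the same object under every $\mathbb{P}_{\kappa}$, so the estimate genuinely holds simultaneously (quasi-surely) across the family.

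The main obstacle I anticipate is bookkeeping rather than a deep new idea: one must be careful that the time change $\sigma(t)=\tilde X_t^2+\tilde Y_t^2$, the passage between the $g_t$ and $\hat g_t$ parametrizations, and the reparametrization absorbing $\sqrt{\kappa}$ into the canonical process are all consistent, so that the SDE \eqref{SDE K_t}, the derivative identity \eqref{SDE to map}, and the martingale $M_t$ refer to the same flow; and that the martingale property (not merely local martingale) is justified uniformly enough that the tail bound is valid for all admissible $(r,b)$ on the relevant windows $0\le t\le 1$, $0\le y_0\le 1$, $e\le\lambda\le y_0^{-1}$. Once the fixed-$\mathbb{P}_{\kappa}$ version is in hand, quantifying over $\mathbb{P}_{\kappa}\in\mathcal{P}_{\kappa}$ is immediate since the argument never used anything about $\mathbb{P}_{\kappa}$ beyond $W_t^{\mathbb{P}_{\kappa}}$ being a standard Brownian motion under it. The subsequent Corollary (the analogue of Corollary \ref{coro}) is then obtained exactly as in \cite{lawler2011conformal} by optimizing the bound of Proposition \ref{7.2qs} over $r$ in the three regimes $r\lessgtr 2b/\kappa$.
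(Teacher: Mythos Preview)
Your proposal is correct and follows essentially the same route as the paper: fix a measure $\mathbb{P}_{\kappa}$, use that $\tilde W_t^{\mathbb{P}_{\kappa}}$ is a standard Brownian motion there, compute the It\^o differential of $M_t$ and verify the drift vanishes under the indicial relation, then apply Markov's inequality together with $(|\tilde Z_t|/\tilde Y_t)^{2r}\ge 1$ to obtain the tail bound. The only cosmetic difference is that the paper first rewrites $|\tilde h_t'(z_0)|$ and $M_t$ in terms of the auxiliary process $\tilde N_t=\tilde K_t/(1+\tilde K_t)$ (built from the aggregated solution $\tilde K_t$) before applying It\^o, arriving at $dM_t=2r\sqrt{\tilde N_t}\,M_t\,d\tilde W_t^{\mathbb{P}_{\kappa}}$, whereas you would do the computation directly on $\tilde X_t,\tilde Y_t,|h_t'|$; the algebra and the conclusion are identical.
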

\begin{proof}
By applying the chain rule for the function $L_t= \log h'_t(z_0)$, we obtain that
$L_t=-\int_0^t\frac{2/\kappa}{Z_s^2}ds\,,$ and in particular,
$|\tilde{h}_t'(z_0)|=\exp \left(\frac{2}{\kappa}\int_0^t \frac{\tilde{Y}_s^2-\tilde{X}_s^2}{\tilde{X_s}^2+\tilde{Y_s}^2} ds \right)$.
Moreover, for fixed $\kappa$ we have $\tilde{K}_t=\frac{\tilde{X}_t^2}{\tilde{Y}_t^2}$ and $\tilde{N}_t=\frac{\tilde{K}_t}{1+ \tilde{K}_t}.$ Then, for fixed $\kappa$, we obtain that 
$$|\tilde{h}_t'(z_0)|=e^{-\frac{2t}{\kappa}}\exp \left(\frac{4}{\kappa}\int_0^t \tilde{N}_s ds \right)\,.$$ Next, we use the aggregated solution for $\tilde{K}_t$ and we obtain via $\tilde{N}_t=\frac{\tilde{K}_t}{1+ \tilde{K}_t}$ an aggregator for $\tilde{N}_t$ as well. Next, we prove similar estimates as one obtains for fixed $\kappa$, using the
aggregated solution $\tilde{N}_s$. 


In the $\sigma(t)$ time parametrization, we have that $d\tilde{Y}_t=-\frac{2}{\kappa}\tilde{Y}_t dt\,,$ so in this time parametrization $\tilde{Y}_t$ grows deterministically  $\tilde{Y}_t=\tilde{Y_0}e^{\frac{2}{\kappa}t}\,.$
At this moment, we can rephrase the formula for $M_t$ as 
$$M_t=y_0^{b-(\frac{\kappa r}{2})}e^{-rt}(1-\tilde{N_t})^{-r} \exp \left(\frac{4b}{\kappa} \int_0^t \tilde{N}_s ds\right)\,. $$
and by applying It\^o formula, we obtain that
$$dM_t=2r\sqrt{\tilde{N_t}}M_t d\tilde{W}^{\mathbb{P}_{\kappa}}_t\,, $$
where $d\tilde{W}^{\mathbb{P}_{\kappa}}_t=\int_0^{\sigma(t)} \frac{1}{\sqrt{X_t^2+Y_t^2}}dW^{\mathbb{P}_{\kappa}}_t$ is the Brownian motion that we obtain in the time reparametrization.
This shows that $M_t$ is a martingale, hence
$$\mathbb{E}_{\kappa}[M_t]=\mathbb{E}_{\kappa}[M_0]=y_0^{b-(r\kappa/2)}(|z_0|/y_0)^{2r}\,. $$
Note that since for $r \geq 0\,,$ $(|\tilde{Z}_t|/\tilde{Y_t})^{2r} \geq 1\,,$ then  by Markov inequality, we have that
$$\mathbb{P}_{\kappa}(|\tilde{h}_{t}'(z_0)| \geq \lambda) \leq \lambda^{-b}(|z_0|/y_0)^{2r}e^{t(r-\frac{2b}{\kappa})}\,. $$

\end{proof}

Moreover, we obtain a version of the Corollary \ref{coro} under the family of measures $\mathbb{P}_{\kappa}.$

\begin{corollary}\label{coroqs}
For every $0 \leq r \leq \frac{4}{\kappa}+1\,,$ there is a finite $c=c(\kappa, r)$ such that for all $0 \leq t \leq 1$, $0 \leq y_0 \leq 1\,,$ $e \leq \lambda \leq y_0^{-1}\,,$ for any $\mathbb{P}_{\kappa} \in \mathcal{P}_{\kappa}$ as in \eqref{familyofmeasures}, we have that

$$\mathbb{P}_{\kappa}(|h_{t}'(z_0)| \geq \lambda) \leq \lambda^{-b}(|z_0|/y_0)^{2r}\delta(y_0, \lambda)\,, $$
where $b=\frac{[(\frac{4}{\kappa}+1)r-r^2]\kappa}{2} \geq 0$ and

\[
    \delta(y_0, \lambda)= 
\begin{cases}
    \lambda^{(r\kappa/2)-b},& \text{if }   r < \frac{2b}{\kappa}\,,\\
    -\log (\lambda y_0), & \text{if} \hspace{1mm} r = \frac{2b}{\kappa}\,,\\
    y_0^{b-(r\kappa/2)}, & \text{if }  r > \frac{2b}{\kappa}\,.
\end{cases}
\]
\end{corollary}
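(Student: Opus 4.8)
The plan is to derive Corollary \ref{coroqs} from Proposition \ref{7.2qs} by repeating, under each fixed $\mathbb{P}_{\kappa} \in \mathcal{P}_{\kappa}$, the deterministic argument that passes from Proposition $7.2$ of \cite{lawler2011conformal} (our Proposition \ref{prop4.1}) to Corollary $7.3$ of \cite{lawler2011conformal} (our Corollary \ref{coro}). The key observation is that this derivation is carried out pathwise and under one measure at a time: it only ever uses the exponential bound supplied by the martingale estimate together with two pathwise a priori inequalities, so it is insensitive to whether the driver is a $\mathbb{P}_{\kappa}$-Brownian motion for a single $\kappa$ or the time-changed aggregator $\tilde{W}_t^{\mathcal{P}_{\kappa}}$ restricted to $\mathbb{P}_{\kappa}$.

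Concretely, fix $\mathbb{P}_{\kappa} \in \mathcal{P}_{\kappa}$ and $r \in [0, \frac{4}{\kappa}+1]$, and put $b=\frac{[(\frac{4}{\kappa}+1)r-r^2]\kappa}{2}$, so that $b \ge 0$ and $(r,b)$ solves the quadratic relation $r^2-(\frac{4}{\kappa}+1)r+\frac{2b}{\kappa}=0$ required by Proposition \ref{7.2qs}. That proposition then gives, under $\mathbb{P}_{\kappa}$,
$$\mathbb{P}_{\kappa}\bigl(|\tilde{h}_{t}'(z_0)| \ge \lambda\bigr) \le \lambda^{-b}(|z_0|/y_0)^{2r}\,e^{t(r - \frac{2b}{\kappa})}\,,$$
and it remains only to replace the exponential prefactor by $\delta(y_0,\lambda)$. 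For this I would use the two pathwise facts recorded in the proof of Proposition \ref{7.2qs}: in the $\sigma$-parametrisation the imaginary part grows deterministically, $\tilde{Y}_t = y_0 e^{2t/\kappa}$, and, since $0 \le \tilde{N}_s \le 1$, one has the a priori bound $|\tilde{h}_t'(z_0)| \le e^{2t/\kappa}$. The first fact, together with the fact that $\tilde{Y}$ stays bounded by a $\kappa$-dependent constant up to capacity time $1$, bounds from above the range of $\sigma$-time corresponding to capacity time at most $1$; the second shows that the event $\{|\tilde{h}_t'(z_0)| \ge \lambda\}$ forces $t \ge \frac{\kappa}{2}\log\lambda$. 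Feeding these bounds into $e^{t(r-\frac{2b}{\kappa})}$ according to the sign of $r-\frac{2b}{\kappa}$ produces exactly the three regimes $\lambda^{(r\kappa/2)-b}$, $-\log(\lambda y_0)$, $y_0^{b-(r\kappa/2)}$ of $\delta$, with the $\kappa$-dependent multiplicative constants absorbed into $c=c(\kappa,r)$, precisely as in \cite{lawler2011conformal}; passing back from $\tilde{h}$ to $h$ and imposing $0 \le t \le 1$, $0 \le y_0 \le 1$, $e \le \lambda \le y_0^{-1}$ gives the claimed inequality under $\mathbb{P}_{\kappa}$.

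Since $\mathbb{P}_{\kappa}$ was an arbitrary member of the family \eqref{familyofmeasures} and the only probabilistic input was Proposition \ref{7.2qs} --- which holds under every $\mathbb{P}_{\kappa}$ because the solution of \eqref{SDE K_t} driving the estimate has been aggregated over $\mathcal{P}_{\kappa}$ via Proposition \ref{propaggregat} --- the estimate holds for all $\mathbb{P}_{\kappa} \in \mathcal{P}_{\kappa}$ simultaneously. The only genuinely non-cosmetic point, and hence the main (mild) obstacle, is the time-change bookkeeping relating $|h_t'(z_0)|$ at a fixed capacity time $t \le 1$ to the martingale estimate for $|\tilde{h}_t'(z_0)|$ naturally produced in the $\sigma$-parametrisation, together with checking that the constants so produced depend on $\kappa$ in a harmless way over the compact interval $\mathcal{K}$; but this step is pathwise and identical to the fixed-$\kappa$ case, so no new difficulty arises relative to the classical statement.
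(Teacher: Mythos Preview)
Your proposal is correct and follows essentially the same route as the paper: fix $\mathbb{P}_\kappa$, invoke the martingale estimate of Proposition \ref{7.2qs}, bound the relevant $\sigma$-time window via $\tilde Y_t=y_0e^{2t/\kappa}$ and the a priori bound $|\tilde h_t'(z_0)|\le e^{2t/\kappa}$, and then read off the three regimes of $\delta$. The only cosmetic difference is that the paper spells out the intermediate discretize-and-sum step (bounding $\sup_{s\le T}|\tilde h_s'|$ by $\sum_{j}\mathbb{P}_\kappa(|\tilde h_j'|\ge\lambda)$ over integers $j$ in $[\tfrac{\kappa}{2}\log\lambda,T]$), whereas you defer that to \cite{lawler2011conformal}; either way the argument and the resulting constants are identical.
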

\begin{proof}
From $dY_t=\frac{2Y_t}{X_t^2+Y_t^2}dt\,,$ we obtain that $dY_t \leq \frac{2/\kappa}{Y_t}dt\,,$ and hence we obtain in the following that $Y_t \leq \sqrt{\frac{4}{\kappa}t+y_0^2}\leq \sqrt{\frac{4}{\kappa}+1}\,.$ In the last inequality, we used that $t \leq 1$ and $y_0 \leq 1\,.$ Using the exponential growth of $Y_t$ in this time reparametrization, we obtain that $\tilde{Y_t}=\sqrt{\frac{4}{\kappa}+1}$ at time $T=\frac{\log\sqrt{\frac{4}{\kappa}+1}-\log y_0}{2/\kappa}\,.$

Therefore,
$$\mathbb{P}_{\kappa}(|h_{t}'(z_0)| \geq \lambda ) \leq \mathbb{P}_{\kappa}(\sup_{0 \leq s \leq T}|\tilde{h}_s'(z_0)| \geq \lambda)\,.$$

Using that $|\tilde{h}_t'(z_0)|=e^{-\frac{2t}{\kappa}}\exp \left(\frac{4}{\kappa} \int_0^t \tilde{N}_s ds \right)$ we obtain that $|\tilde{h}'_{t+s}(z_0)| \leq e^{2s/\kappa}|\tilde{h}'_t(z_0)|\,.$ So by addition of the probabilities, we have that
\begin{align*}
\mathbb{P}_{\kappa}(\sup_{0 \leq t \leq T}|\tilde{h}'_t(z_0)| \geq e^{2/\kappa}\lambda) \leq \sum\limits_{j=0}^{[T]}\mathbb{P}_{\kappa}(|\tilde{h}'_j(z_0)|  \geq \lambda)\,.
\end{align*} 
Using the Schwarz-Pick Theorem for the upper half-plane we obtain that $|\tilde{h}_{t}(z_0)|\leq \text{Im} \tilde{h}_t'(z_0)/y_0=e^{2t/\kappa}\,.$ This gives a lower bound for the $t$ that we are summing over and we obtain that
via the Proposition \ref{7.2} that 
\begin{align*}
\mathbb{P}_{\kappa}(\sup_{0 \leq t \leq T}|\tilde{h}'_t(z_0)| \geq e^{2/\kappa}\lambda) &\leq \sum\limits_{(\kappa/2)\log \lambda \leq j \leq T} \mathbb{P}_{\kappa}(|\tilde{h}_j'(z_0)|\geq \lambda)\\
&\leq \lambda^{-b}(|z_0|/y_0)^{2r}\sum\limits_{ (\kappa/2)\log \lambda \leq j \leq T} e^{j(r-ab)} \\
&\leq c\lambda^{-b} (|z_0|/y_0)^{2r}\delta(y_0, \lambda)\,.
\end{align*}
\end{proof}

In order to prove the result, we need the following Lemma, that we recall from the introduction for the convenience of the reader. 
\begin{lemma}[Lemma 5.5 of \cite{kemppainen2017schramm}]\label{samed}
Let $h_t(z)$ be the solution to the backward Loewner differential equation with driving function $\sqrt{\kappa}B_t$ and let $f_t(z)$ be the solution of the partial differential equation version of the Loewner differential equation with the same driver. Then, for any $t \in \mathbb{R}_+$, the function $z \to f_t(z+\sqrt{\kappa}B_t)-\sqrt{\kappa}B_t$ and $z \to h_t(z)$ have the same distribution.
\end{lemma}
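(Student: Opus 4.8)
The plan is to combine the time-reversal symmetry of the Loewner flow with the reversibility of Brownian motion. Throughout, fix $t>0$ and write $U_s=\sqrt{\kappa}B_s$. First I would record the identification already stated in the text: since $z\mapsto g_s(z)$ and $z\mapsto f_s(z)$ are mutually inverse, $f_t=g_t^{-1}$, so the random map under consideration is $z\mapsto g_t^{-1}(z+U_t)-U_t$, and the goal is to show that it has the same law as the time-$t$ solution map of the backward equation \eqref{6} driven by $\sqrt{\kappa}B_s$.

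Next comes the deterministic reversal identity, carried out for a fixed continuous driver $U$. Let $(g_s)_{0\le s\le t}$ solve the forward Loewner equation \eqref{5} and set $\psi_s:=g_{t-s}\circ g_t^{-1}$ for $s\in[0,t]$, so that $\psi_0=\mathrm{id}$ and $\psi_t=g_t^{-1}$. Writing $w=g_t(z)$ and differentiating in $s$, one gets $\partial_s\psi_s(w)=-\dot{g}_{t-s}(z)=\frac{-2}{\psi_s(w)-U_{t-s}}$; that is, $\psi_s$ solves the backward equation \eqref{6} with the time-reversed driver $s\mapsto U_{t-s}$. Recentering at the endpoint, define $\tilde{\psi}_s(z):=\psi_s(z+U_t)-U_t$; then $\tilde{\psi}_0=\mathrm{id}$, $\tilde{\psi}_t(z)=g_t^{-1}(z+U_t)-U_t=f_t(z+U_t)-U_t$, and a short computation shows $\tilde{\psi}_s$ solves \eqref{6} with driver $\hat{U}_s:=U_{t-s}-U_t=\sqrt{\kappa}\,(B_{t-s}-B_t)$, which starts at $\hat{U}_0=0$.

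The probabilistic step is then immediate: by the reversibility of Brownian motion, $(B_{t-s}-B_t)_{0\le s\le t}$ is again a standard Brownian motion on $[0,t]$, so the driving path $(\hat{U}_s)_{0\le s\le t}$ has the same law as $(U_s)_{0\le s\le t}$. Since the solution map of \eqref{6} on $[0,t]$ is a measurable functional of the driver (with continuous dependence in the sup-norm, away from blow-up), it follows that the time-$t$ map $z\mapsto f_t(z+U_t)-U_t$ driven by $\hat{U}$ has the same distribution as the time-$t$ map driven by $U$, namely $z\mapsto h_t(z)$, which is the claim.

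The part requiring care is not the probability but the bookkeeping in the reversal identity: one must verify that $g_t^{-1}$ is defined on all of $\mathbb{H}$, that the composition $g_{t-s}\circ g_t^{-1}$ and its recentering remain valued in $\mathbb{H}$ for $s\in[0,t]$, and that the $s$-differentiation leading to the backward equation is justified uniformly on compacts. One should also invoke explicitly the measurability (continuity) of the Loewner solution map in the driving path, so that equality in law of the drivers transfers to equality in law of the flows. Everything else is routine, and the argument is the classical time-reversal computation (as in \cite{schramm2005basic} and \cite{lawler2008conformally}).
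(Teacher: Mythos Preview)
The paper does not supply its own proof of this lemma; it is quoted twice from \cite{kemppainen2017schramm} without argument. Your proof is correct and is exactly the standard one --- deterministic time-reversal of the forward flow to produce a backward flow with driver $\hat U_s=U_{t-s}-U_t$, followed by the distributional identity $(B_{t-s}-B_t)_{0\le s\le t}\eqdist(B_s)_{0\le s\le t}$ --- which is precisely the argument in the cited reference (and in \cite{schramm2005basic}, \cite{lawler2008conformally}), so there is nothing further to compare.
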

We note that the previous lemma is also true when one changes the measures $\mathbb{P}_{\kappa}$, for $\kappa \in \mathcal{K}\cap \mathbb{R}_+ \setminus ([0, \epsilon) \cup \{8\})$, for any $\epsilon>0$.
We combine the previous results, to obtain the quasi-sure existence of the $SLE_{\kappa}$ trace. 
\begin{theorem}
Let $\epsilon>0$. Let $\mathcal{K} \subset \mathbb{R}_+$ be a nontrivial compact interval and let us consider the family of probability measures $\mathcal{P}_{\kappa}$ as in \eqref{familyofmeasures}.  Then, for $\kappa \in \mathcal{K}\cap \mathbb{R}_+ \setminus ([0, \epsilon) \cup \{8\})$ the chordal $SLE_{\kappa}$ is quasi surely (i.e $\mathbb{P}_{\kappa}$-a.s. for all $\kappa \in \mathcal{K}\cap \mathbb{R}_+ \setminus ([0, \epsilon) \cup \{8\})$) generated by a path .
\end{theorem}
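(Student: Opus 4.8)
The strategy is to re-run the Rohde--Schramm argument behind Theorem \ref{Rohdeschramm} --- the moment estimates for the reverse flow fed into the continuity criterion (Proposition $4.33$ of \cite{lawler2011conformal}) --- but with every probabilistic input routed through the \emph{single} aggregated process produced by Proposition \ref{propaggregat}, so that the exceptional events are genuine subsets of the canonical space $\Omega$. Fix an admissible $\kappa$. Let $\tilde K$ be the $\mathcal P_\kappa$-aggregated solution of \eqref{SDE K_t}, put $\tilde N=\tilde K/(1+\tilde K)$, and define $|\tilde h_t'(z_0)|$ via \eqref{SDE to map}; all of these are fixed functionals of the universal Brownian motion $W^{\mathcal P_\kappa}$. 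By Lemma \ref{samed}, the derivative $\hat f_t'$ of the forward (PDE) map has, under every $\mathbb P_\kappa\in\mathcal P_\kappa$, the same law as $h_t'$, so the tail bound of Corollary \ref{coroqs} transfers verbatim to $|\hat f_t'(\cdot)|$, with Proposition \ref{7.2qs} as its engine.

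Next, fix the dyadic mesh of Proposition $4.33$ of \cite{lawler2011conformal}, evaluate at $z_0=iy_0$ with $y_0=2^{-j}$, and take the exponent $r$ of Corollary \ref{coroqs} to be the minimiser $r^\ast=\tfrac2\kappa+\tfrac14$ of $r\mapsto r\kappa/2-2b$ (admissible since $r^\ast\leq \tfrac4\kappa+1$ always); a short computation gives $2b-r^\ast\kappa/2-2=4/\kappa+\kappa/16-1=(\kappa-8)^2/(16\kappa)$, strictly positive precisely because $\kappa\neq 8$. Choose $r_j\downarrow 0$ with $\sqrt j/\log r_j\to 0$, not decaying faster than $2^{-j}$ (e.g. $r_j=1/j$). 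A union bound over the $2^{2j}$ dyadic times, together with Corollary \ref{coroqs} applied with $\lambda\asymp 2^j r_j$, then yields for each fixed $\kappa$
\begin{equation*}
\mathbb P_\kappa\!\left(\exists\,k\leq 2^{2j}:\ \bigl|\hat f_{k2^{-2j}}'(2^{-j}i)\bigr|>2^j r_j\right)\ \leq\ C(\kappa)\,j^{\,p}\,2^{-j(\kappa-8)^2/(16\kappa)}\,,
\end{equation*}
a summable series, so by Borel--Cantelli the first hypothesis of Proposition $4.33$ of \cite{lawler2011conformal} holds $\mathbb P_\kappa$-a.s. The driver hypothesis $|U_{t+s}-U_t|\leq c\sqrt j\,2^{-j}$ for $0\leq s\leq 2^{-2j}$, $0\leq t\leq 1$, holds $\mathbb P_\kappa$-a.s. because the driver is $\sqrt\kappa\,W^{\mathcal P_\kappa}$, a $\mathbb P_\kappa$-Brownian motion satisfying Lévy's modulus of continuity; the third hypothesis is the choice of $r_j$. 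Proposition $4.33$ of \cite{lawler2011conformal} thus makes $V(y,t)=\hat f_t(iy)$ continuous on $[0,1]^2$, $\mathbb P_\kappa$-a.s.; the standard patching over dyadic time blocks (as in \cite{schramm2005basic}) extends this to all $t\geq 0$ and identifies $\gamma^\kappa(t)=\lim_{y\to 0}\hat g_t^{-1}(iy)$ as a continuous path generating $(K_t)_{t\geq 0}$.

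It remains to upgrade ``$\mathbb P_\kappa$-a.s.\ for each fixed $\kappa$'' to the quasi-sure assertion. Let $N_\kappa\subset\Omega$ be the event on which the above construction fails for the parameter $\kappa$; it depends on the sample path only through the universal process $W^{\mathcal P_\kappa}$. Since $W^{\mathcal P_\kappa}$ is a Brownian motion under \emph{every} $\mathbb P\in\mathcal P_\kappa$, the solution $\tilde K$ of the fixed SDE \eqref{SDE K_t}, hence $\hat f_t^\kappa$ and the set $N_\kappa$, has a law not depending on which $\mathbb P\in\mathcal P_\kappa$ one computes it under; consequently $\mathbb P(N_\kappa)=0$ for all $\mathbb P\in\mathcal P_\kappa$, i.e.\ $cap(N_\kappa)=0$. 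Picking a countable dense subset $D$ of $\mathcal K\cap\mathbb R_+\setminus([0,\epsilon)\cup\{8\})$, the set $\bigcup_{\kappa\in D}N_\kappa$ is polar, and off it $\gamma^\kappa$ exists and generates the hulls for every $\kappa\in D$. Since the range lies in a compact subset of $(0,\infty)$, the constants in Corollary \ref{coroqs} and in the Lévy modulus are uniform in $\kappa$, so on this full-capacity event the deterministic stability estimate of \cite{viklund2014continuity} for Loewner flows driven by nearby (rescalings of the same) driver applies and shows $\kappa\mapsto\gamma^\kappa$ is locally uniformly continuous; hence the family extends from $D$ to all admissible $\kappa$ off the same polar set, which is the claim.

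The main obstacle is precisely this last passage. A single fixed $\kappa$ is handled by the classical scheme verbatim once everything is erected on the aggregator; but turning the \emph{necessarily uncountable} union $\bigcup_\kappa N_\kappa$ into a polar set needs both (i) that the aggregator be a Brownian motion under every measure of the family --- so a fixed functional of it has a $\kappa$-independent law and its null sets are polar --- and (ii) continuity in $\kappa$, to reduce to a countable dense parameter set. The two exclusions are exactly what these ingredients cost: $\kappa=8$ must go because the summability exponent $(\kappa-8)^2/(16\kappa)$ degenerates there (and the Rohde--Schramm method genuinely fails at $\kappa=8$, cf.\ \cite{lawler2011conformal}), while a neighbourhood of $0$ is removed so that the parameter range is compactly contained in $(0,\infty)$ and all constants stay uniform --- and there the a.s.\ statement is in any case classical.
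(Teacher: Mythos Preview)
Your approach and the paper's diverge at the step where one passes from a single $\kappa$ to all $\kappa$ simultaneously. The paper does this directly via Borel--Cantelli for \emph{capacities}: it bounds $\sup_\kappa \mathbb P_\kappa\bigl(|h_t'(i2^{-j})|\geq 2^{j-\epsilon_1}\bigr)$ uniformly (using precisely the uniformity of the constants in Corollary~\ref{coroqs} over compact $\kappa$-ranges away from $0$ and $8$ that you yourself note), obtains $\mathrm{cap}(\cdot)\leq c\,2^{-(2+\epsilon_1)j}$, sums over the dyadic mesh, and produces a single polar set off which the derivative hypotheses of Proposition~$4.33$ hold.

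You instead (i) run ordinary Borel--Cantelli for each fixed $\kappa$, (ii) observe that the resulting null set $N_\kappa$ is automatically polar because it is a measurable functional of the universal Brownian motion $W^{\mathcal P_\kappa}$, whose law is Wiener under \emph{every} $\mathbb P\in\mathcal P_\kappa$, (iii) take a countable dense $D$, and (iv) extend to all $\kappa$ by continuity. Step~(ii) is correct and is a genuinely different idea from the paper's---it bypasses the capacity Borel--Cantelli entirely. The gap is in step~(iv): the stability estimate you invoke (Lemma~\ref{Lemvik}) controls $|f_t^{(\kappa)}(u)-f_t^{(\kappa_0)}(u)|$ only for $u\in\mathbb H$, with a bound that blows up like $y^{-\phi(\beta)}$ as $y=\operatorname{Im}u\to 0$. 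It therefore yields continuity of $F(t,y,\kappa)$ in $\kappa$ for $y>0$, not of the \emph{trace} $\gamma^\kappa(t)=\lim_{y\to 0}F(t,y,\kappa)$. Upgrading to the trace requires the Whitney-box argument underlying Theorem~\ref{qscontinuity}, which in the paper is proved \emph{after} the present theorem and uses it as input; as written your argument is circular, or at least logically out of order.

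A clean repair that preserves your route: do not extend the \emph{traces} from $D$ to all $\kappa$; extend the \emph{derivative estimates} instead. On your full-capacity event the inequalities $|\hat f^{(\kappa)\prime}_{k2^{-2j}}(i2^{-j})|\leq c\,2^{j\beta}$ hold for all $\kappa\in D$ and all $j,k$; since for fixed $z\in\mathbb H$ the Loewner map depends deterministically continuously on its driver $\sqrt\kappa\,W^{\mathcal P_\kappa}$, these pointwise inequalities pass to the closure $\overline D$, and then Proposition~$4.33$ applies for every $\kappa$ on that same event. This recovers the paper's conclusion by a different road.
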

\begin{proof}
Using the scaling of the $SLE_{\kappa}\,,$ it suffices to prove the Theorem only for $t \in [0,1]\,.$
In order to prove the existence of the trace for fixed $\kappa$ a.s., according to the analysis presented in the previous section from \cite{lawler2008conformally} (see also \cite{schramm2005basic}) one needs to obtain a.s. the estimates 
\begin{align*}
|f'_{k2^{-2j}}(i2^{-j})| \leq \tilde{C}(\kappa, \omega)2^{j-\epsilon_1}\,, j=1,2,\ldots, k=0,1, \ldots, 2^{2j}\,,\\
\sqrt{\kappa}|W_t-W_s| \leq c_1(\kappa, \omega) \sqrt{\kappa}|t-s|^{1/2}|\log \sqrt{|t-s|}| \hspace{5mm}  0 \leq t \leq 1\,.
\end{align*}
In order to adapt the proof for fixed $\kappa$ of the existence of the $SLE_{\kappa}$ traces from the previous section in the Quasi-Sure Analysis through Aggregation setting, it suffices to show that q.s. there exists an $\epsilon_1>0$ and a random constant $C$ (that depends on the worst $\kappa$ in the sense of convergence of the probabilities series in the Borel-Cantelli argument, see below) such that
\begin{align*}
|f'_{k2^{-2j}}(i2^{-j})| \leq C2^{j-\epsilon_1}\,, j=1,2,\ldots, k=0,1, \ldots, 2^{2j}\,,\\
\sqrt{\kappa}|W_t^{\mathbb{P}_{\kappa}}-W_s^{\mathbb{P}_{\kappa}}| \leq c_1\sqrt{\kappa}|t-s|^{1/2}|\log \sqrt{|t-s|}| \hspace{5mm}  0 \leq t \leq 1\,.
\end{align*}

 The second inequality holds $\mathbb{P}_{\kappa}$-a.s. for every measure $\mathbb{P}_{\kappa} \in \mathcal{P}_{\kappa}$ as in \eqref{familyofmeasures} and is a consequence of the modulus of continuity for the Brownian motion. 
For the first inequality, we use the aggregated solution of the SDE as in \eqref{SDE K_t} and the relation \eqref{SDE to map} the previous section and we consider $r=\frac{2}{\kappa}+\frac{1}{4}< \frac{4}{\kappa}+1$ and 
$b=\frac{(1+\frac{4}{\kappa})r-r^2}{2/\kappa}=\frac{2}{\kappa}+1+\frac{3}{32/\kappa}\,,$ according to the Corollary \ref{coroqs}\,. Thus, we are in the regime $r <\frac{2b}{\kappa}\,,$ so by Corollary \ref{coroqs} we have the following $\kappa$ dependent bound
\begin{equation}\label{kdependentderivativeestimate}
\mathbb{P}_{\kappa}(|h'_t(i2^{-j})| \geq 2^{j-\epsilon_1}) \leq c2^{-j(2b-(2r/\kappa))(1-\epsilon_1)}
\end{equation}
In order to obtain the estimate for the family of functions $f_t(z)$, we use for each measure $\mathbb{P}_{\kappa}$ the Lemma \ref{samed}.

We repeat the optimization procedure for each $\kappa \in \mathcal{K}\cap \mathbb{R}_+ \setminus ([0, \epsilon) \cup \{8\})$ and obtain a control for  $c$ in $\kappa$ in the estimate \eqref{kdependentderivativeestimate}. We present parts of the analysis in the following. For more details on how to optimize over the parameters $r$ and $b$ for fixed $\kappa$ we refer the reader to the proof of the existence of the trace in \cite{kemppainen2017schramm}.
Following the analysis from Section $5.5$ of \cite{kemppainen2017schramm}, the parameter $b$ is expressed in terms of $r$ for fixed $\kappa$. In this setting, we have $b(r)=\frac{\kappa((1+4/\kappa)r-r^2)}{2}$. Then, one studies the quantity $\alpha(r)=2b(r)-\frac{r\kappa}{2}$ that is maximized by $r_0=\frac{1}{4}+\frac{2}{\kappa}$. Thus, $\alpha(r_0)=\kappa(1/4+2/\kappa)^2 \geq 2$ and $\alpha(r_0)=2$ if and only if $\kappa=8$. Then one obtains  $b=b_0=\frac{\kappa((1+4/\kappa)r_0-r_0^2)}{2}$. 
One  can express explicitly the constant $c$ in the estimate 
$\mathbb{P}_{\kappa}(\sup_{0 \leq t \leq T}|\tilde{h}'_t(z_0)| \geq e^{2/\kappa}\lambda) \leq c\lambda^{-b} (|z_0|/y_0)^{2r}\delta(y_0, \lambda)$ 
as an explicit function of $p_0$, $\kappa$ and $r_0$ in the case $ b_0-\kappa r_0/2 \leq 0$ i.e. $\kappa>8$ and similarly when $ b_0-\kappa r_0/2 \geq 0$, i.e. when $\kappa <8$. 
Moreover, one can control this expressions and give a bound for $c$ for all the $\kappa \in \mathcal{K}\cap \mathbb{R}_+ \setminus ([0, \epsilon) \cup \{8\})$, for any $\epsilon>0$.

Thus, in order to obtain the first inequality for all the measures $\mathbb{P}_{\kappa}$ in our interval of interest, we use the aggregated solution of the SDE as in \eqref{SDE K_t} and the relation \eqref{SDE to map} along with  Borel-Cantelli Lemma for capacities (see \cite{bouleau2010dirichlet}, \cite{denis2006theoretical}, \cite{songanother}) and Lemma \ref{samed}  to find $c $ (that can be chosen a continuous function of $\kappa$) and $\epsilon_1>0$ such that for all $0 \leq t \leq 1$

\begin{equation}\label{fixk}
\sup_{\kappa}\mathbb{P}_{\kappa}(|h'_t(i2^{-j})| \geq 2^{j-\epsilon_1}) \leq    \sup_{\kappa}c2^{-j(2b-(2r/\kappa))(1-\epsilon_1)}. 
\end{equation}

We obtain that $2b-(2r/\kappa) = 4/\kappa+1+\kappa/16 >2 $ provided that $2/\kappa \neq 1/4$ . So, we can apply Borel-Cantelli argument for capacities provided that $2/\kappa \neq 1/4$, i.e. $\kappa \neq 8$. We restrict to  $\kappa \in \mathcal{K}\cap \mathbb{R}_+ \setminus ([0, \epsilon) \cup \{8\})$, in order to obtain the control on the constant $c$ in \eqref{fixk}.
Thus, we have that
$$cap(|h'_t(i2^{-j})| \geq 2^{j-\epsilon_1}) \leq c2^{-(2+\epsilon_1)j} \,.$$
Let us consider the dyadic partition of the time interval $[0,1]$, $\mathcal{D}_{2n}=\{l2^{-2n}: l \in [0, 2^{2n}]\}$. Then,

\begin{equation}\label{finalest}
\sum_{n \in \mathbb{N}}\sum_{t \in \mathcal{D}_{2n}}cap(|h'_t(i2^{-j})| \geq 2^{j-\epsilon_1}) < \infty, 
\end{equation}
and we obtain the desired conclusion.
\end{proof}


\begin{remark}
In the previous result, we showed how one can aggregate the $SLE_{\kappa}$ curves for the family of measures $\mathcal{P}_{\kappa}$. For all the given measures $\mathbb{P}_{\kappa}$ one can obtain under all the measures $\mathbb{P}_{\kappa}$ the $SLE_{\kappa}$ traces as $\gamma^{\kappa}(t):=\lim_{y \to 0+}g_t^{-1}(iy+\sqrt{\kappa}W_t^{\mathbb{P}_{\kappa}})$ where $g_t$ are the maps satisfying the forward Loewner Differential Equation driven by $\sqrt{\kappa}W_t^{\mathbb{P}_{\kappa}}$. Also, for any fixed measure $\mathbb{P}_{\kappa}$ one can obtain via L\' evy's Characterization of the Brownian motion also an $SLE_{\kappa_1}$ trace $\mathbb{P}_{\kappa}$ -a.s. defined as $\gamma^{\kappa_1}_{\kappa}(t):=\lim_{y \to 0+}g_t^{-1}(iy+\sqrt{\kappa_1}W_t^{\mathbb{P}_{\kappa}})$ with $g_t$ are the maps satisfying the forward Loewner Differential Equation driven by $\sqrt{\kappa_1}W_t^{\mathbb{P}_{\kappa}}$. Once these two objects are defined, we would like to compare them under the family of measures $\mathbb{P}_{\kappa}$ for $\kappa \in \mathcal{K}\cap \mathbb{R}_+ \setminus ([0, \epsilon) \cup \{8\})$. This can be understood also as comparing the difference between the $SLE_{\kappa}$ traces generated by the canonical process under the family of measures $\mathbb{P}_{\kappa}$ and the ones generated by $\sqrt{\kappa_1}W_t^{\mathcal{P}_{\kappa}}$ with $\kappa_1$ fixed and $W_t^{\mathcal{P}_{\kappa}}$ being the Universal Brownian motion, i.e. the aggregator of the Brownian motion under the family of measures $\mathbb{P}_{\kappa}$. 
\end{remark}

\section{The quasi-sure continuity in $\kappa$ for $\kappa \in \mathcal{K}\cap \mathbb{R}_+ \setminus ([0, \epsilon) \cup \{8\})$ of the $SLE_{\kappa}$ traces}

Once the $SLE_{\kappa}$ traces  are constructed quasi-surely, we would like to prove the quasi-sure continuity in $\kappa$ of the traces.


Using  quasi-sure definition of the 
$SLE_{\kappa}$ trace allows us directly to consider uncountably many parameters $\kappa \in \mathcal{K}\cap \mathbb{R}_+ \setminus ([0, \epsilon) \cup \{8\})$. In this section, we use the notation $[\kappa_m, \kappa_M]$ for the nontrivial compact interval $\mathcal{K}\cap \mathbb{R}_+ \setminus ([0, \epsilon) \cup \{8\})$. Furthermore, we consider the following coupling: we fix a parameter $\kappa_1 \in [\kappa_m, \kappa_M]$ and we consider the canonical process on the path space under the measures $\mathbb{P}_{\kappa_1}$ and $\mathbb{P}_{\kappa}$ for $\kappa \in [\kappa_m, \kappa_M]$ and we compare the traces obtained for the fixed choice $\kappa_1$ with the family of traces obtained for $\kappa \in  [\kappa_m, \kappa_M]$, with $\kappa_m > \epsilon$, for any $\epsilon>0$.  

Let us consider $M$ to be the space of continuous curves defined on $[0,1]$ with values in the closed upper half-plane $\bar{\mathbb{H}}=\{z: \operatorname{Im} (z) \geq 0\}$. Further, we equip the space $M$ with the supremum norm. Let $l \in \mathbb{N}.$ Let us consider for any measure $\mathbb{P}_{\kappa_l}$ the corresponding $W_t^{\mathbb{P}_{\kappa_l}}$-Brownian motion, obtained from the aggregator of the Brownian motion. We further use the quasi-sure definition of the $SLE_{\kappa}$ traces obtained in the previous section. When we drive the forward Loewner differential equation with $\sqrt{\kappa_l}W_t^{\mathbb{P}_{\kappa_l}}$ we obtain $\mathbb{P}_{\kappa_l}$-a.s. the family of $SLE$ traces $\gamma^{l}(t):= \lim_{y \to 0+}g_t^{-1}(iy+\sqrt{\kappa_l}W_t^{\mathbb{P}_{\kappa_l}})$ (this quantity is defined simultaneously for all the measures $\mathbb{P}_{\kappa_l}$ in the family). Let us fix $\kappa \in [\kappa_m, \kappa_M]$ (wlog $\kappa=\kappa_m)$. Then, when we drive the Loewner equation for the fixed $\kappa_m$ with $\sqrt{\kappa_m}W_t^{\mathbb{P}_{\kappa_l}}$ we obtain $\mathbb{P}_{\kappa_l}$-a.s. the fixed $SLE_{\kappa}$ trace $\gamma_{l}^{\kappa_m}(t):= \lim_{y \to 0+}g_t^{-1}(iy+\sqrt{\kappa_m}W_t^{\mathbb{P}_{\kappa_l}})$ under all $\mathbb{P}_{\kappa_l}$ measures. We compare the curves in the sup-norm under all elements of the family of measures $\mathbb{P}_{\kappa_l}$. 

\begin{theorem}[Quasi-sure continuity in $\kappa$ of the $SLE_{\kappa}$ traces]\label{qscontinuity}
Let $\mathcal{K} \subset \mathbb{R}_+$ be a nontrivial compact interval. Then, the $SLE_{\kappa}$ traces are quasi-surely continuous in $\kappa$, for $\kappa \in [\kappa_m, \kappa_M]=\mathcal{K}\cap \mathbb{R}_+ \setminus ([0, \epsilon) \cup \{8\})$ -without loss of generality let us take $\kappa=\kappa_m$-, 
i.e. there exists a function $\theta$ with $\theta(\delta)\to 0$ as $\delta \to 0$ such that for $\kappa_M \to \kappa_m$,  for $t \in [0,1]$, we have $$||\gamma^{l}(t)-\gamma^{\kappa_m}_{l}(t)||_{\infty, [0,1]} \leq \theta( |\kappa_M -\kappa_m|),$$ quasi-surely (i.e. for all the measures $\mathbb{P}_{\kappa_l}$ with $\kappa_l \in [\kappa_m, \kappa_M]$) outside of a polar set that depends on $[\kappa_m, \kappa_M]$.

\end{theorem}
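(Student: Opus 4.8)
The plan is to reduce the statement to a deterministic stability estimate for the Loewner flow together with the quasi-sure derivative bounds already established. The key input is the estimate of Viklund--Wang type (from \cite{viklund2014continuity}), which controls $\|\hat f^{U}_t - \hat f^{V}_t\|_{\infty,[0,1]}$ between the two Loewner traces in terms of (i) the uniform distance $\|U-V\|_{\infty,[0,1]}$ between the two drivers and (ii) a uniform bound on the derivative blow-up $\sup_{t,y}|\hat f'_t(iy)|$ along a dyadic grid, for both chains. In our coupling both chains are driven by the \emph{same} aggregated Brownian motion $W^{\mathbb{P}_{\kappa_l}}_t$: the first by $\sqrt{\kappa_l}\,W^{\mathbb{P}_{\kappa_l}}_t$ and the second by $\sqrt{\kappa_m}\,W^{\mathbb{P}_{\kappa_l}}_t$. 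Hence the driver distance is simply $|\sqrt{\kappa_l}-\sqrt{\kappa_m}|\cdot\sup_{t\in[0,1]}|W^{\mathbb{P}_{\kappa_l}}_t|$, which is $\mathbb{P}_{\kappa_l}$-a.s.\ finite and, crucially, is bounded by a single random variable that works for all $\kappa_l\in[\kappa_m,\kappa_M]$ since $W^{\mathbb{P}_{\kappa_l}}_t$ is the \emph{aggregator} — it is one fixed path (up to polar sets) and its sup over $[0,1]$ is a genuine random constant $C(\omega)$. Thus $\|U-V\|_{\infty}\le |\sqrt{\kappa_l}-\sqrt{\kappa_m}|\,C(\omega)\lesssim|\kappa_M-\kappa_m|\,C(\omega)$ on $[\kappa_m,\kappa_M]$.

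First I would set up the coupling precisely: fix the aggregated driver $W^{\mathbb{P}_{\kappa_l}}_t=W^{\mathcal P_\kappa}_t$, and let $\gamma^l$ and $\gamma^{\kappa_m}_l$ be the two traces as in the statement. Second, I would invoke Corollary~\ref{coroqs} (the quasi-sure derivative estimate) together with the Borel--Cantelli argument for capacities used in the previous theorem to produce, outside a single polar set depending only on $[\kappa_m,\kappa_M]$, a random $\epsilon_1>0$ and random constant $C$ with $|\hat f'_{k2^{-2j}}(i2^{-j})|\le C\,2^{j(1-\epsilon_1)}$ simultaneously for all $\kappa\in[\kappa_m,\kappa_M]$ and all dyadic $(j,k)$ — this is exactly the uniform grid control the Viklund--Wang estimate requires, and we already know the constant can be taken continuous (hence bounded) in $\kappa$ on the compact interval $[\kappa_m,\kappa_M]$. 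Third, I would feed both chains into that estimate: since both $\sqrt{\kappa_l}W_t$ and $\sqrt{\kappa_m}W_t$ share the grid bound (the bound from Corollary~\ref{coroqs} is uniform over $\kappa\in[\kappa_m,\kappa_M]$, and the second chain uses the fixed $\kappa_m$ which lies in the interval), the estimate yields
$$\|\gamma^l(t)-\gamma^{\kappa_m}_l(t)\|_{\infty,[0,1]}\le \Psi\!\left(\|U-V\|_{\infty,[0,1]}\right)$$
for a modulus $\Psi$ depending only on $C,\epsilon_1,[\kappa_m,\kappa_M]$, with $\Psi(\delta)\to0$ as $\delta\to0$. Combining with the driver bound and setting $\theta(\delta):=\Psi(C(\omega)\,\delta)$ (absorbing the random constant, or stating the estimate with an explicit random prefactor) gives the claim on the complement of the polar set.

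The main obstacle is \textbf{uniformity of the derivative bound over $\kappa$}: the Viklund--Wang input needs the grid bound to hold for \emph{both} drivers with a constant that does not degenerate as $\kappa_M\to\kappa_m$, and this must hold outside a \emph{single} polar set valid for the whole interval — not a $\kappa$-indexed family of a.s.\ statements. This is precisely where the aggregation machinery is essential: Corollary~\ref{coroqs} gives the bound $\mathbb{P}_{\kappa}$-a.s.\ for each $\kappa$ with constant $c(\kappa,r)$ continuous in $\kappa$, and the capacity Borel--Cantelli lemma (\cite{bouleau2010dirichlet}, \cite{denis2006theoretical}) upgrades the sum $\sum_{j,k}\mathrm{cap}(|h'_{k2^{-2j}}(i2^{-j})|\ge 2^{j-\epsilon_1})<\infty$ — finite because $2b-2r/\kappa>2$ uniformly on $[\kappa_m,\kappa_M]$ (away from $\kappa=8$) — into a quasi-sure statement on one polar set. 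The secondary technical point is that the second chain is driven by $\sqrt{\kappa_m}W^{\mathbb{P}_{\kappa_l}}_t$ rather than by a standard $\mathbb{P}_{\kappa_m}$-Brownian motion, but by L\'evy's characterization $W^{\mathbb{P}_{\kappa_l}}_t$ is a $\mathbb{P}_{\kappa_l}$-Brownian motion, so under $\mathbb{P}_{\kappa_l}$ the chain $\sqrt{\kappa_m}W^{\mathbb{P}_{\kappa_l}}_t$ is a genuine $SLE_{\kappa_m}$ driver and Corollary~\ref{coroqs} applies verbatim with parameter $\kappa_m\in[\kappa_m,\kappa_M]$; one must also invoke Lemma~\ref{samed} to transfer the backward-flow derivative estimates to the forward maps $\hat f_t$ for both chains, exactly as in the previous theorem. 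Everything else — dyadic summation, the passage from the grid bound to continuity of $V(y,t)=\hat f_t(iy)$, and the final triangle-inequality bookkeeping — is routine.
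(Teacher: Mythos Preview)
Your proposal is correct and follows essentially the same route as the paper: couple both chains through the aggregated Brownian motion, establish the quasi-sure derivative bound on the dyadic grid via Corollary~\ref{coroqs} and the capacity Borel--Cantelli lemma (uniformly over $\kappa\in[\kappa_m,\kappa_M]$), bound the driver distance by $|\sqrt{\kappa_l}-\sqrt{\kappa_m}|\sup_t|W_t|$, and feed everything into the Viklund--Wang stability estimate (Lemma~\ref{Lemvik}). The only presentational difference is that the paper carries out the Whitney-box decomposition of $(t,y,\kappa)$-space explicitly and chains along the $\kappa$-axis to extract a concrete H\"older exponent $|\gamma^{\kappa_1}(t)-\gamma^{\kappa_2}(t)|\le C|\kappa_1-\kappa_2|^{\delta/q}$, whereas you package that step as a black-box modulus $\Psi$; the substance is the same.
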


\begin{proof}[Proof of Theorem \ref{qscontinuity}]
Throughout the proof, we use the notation $F(t, y ,\kappa)=f^{(\kappa)}_t(iy)\,.$ We showed in the previous section that one can construct for the sequence of measures $\mathbb{P}_{\kappa}$ the $SLE_{\kappa}$ traces simultaneously and can view them as elements of the metric space $M$.
We use the set-up from \cite{viklund2014continuity}, in order to define the Whitney-type partition of the $(t,y,\kappa)$ space. The main idea of this section is to show how we can avoid the typical Borel-Cantelli argument of \cite{viklund2014continuity} using the quasi-sure construction of the $SLE_{\kappa}$ traces from the previous section.

We also need the following distortion result for conformal maps.

\begin{lemma}[Distortion Lemma: Lemma 2.2 in \cite{viklund2014continuity}] \label{dist}
There exists a constant $0<c<\infty$ such that the following holds. Suppose that $f_t$ satisfies the chordal Loewner PDE \ref{4} and that $z=x+iy \in \mathbb{H}$, then for $0 \leq s \leq y^2$
$$c^{-1} \leq \frac{|f_{t+s}'(z)|}{|f_{t}'(z)|} \leq c$$
and
$$ |f_{t+s}(z)-f_t(z)| \leq cy|f'_t(z)|\,.$$
\end{lemma}

We consider the partition of the $(t,y, \kappa)$ three dimensional space in boxes obtain by partitioning each coordinate. We follow the proof in \cite{viklund2014continuity} and we estimate the derivative of the map $(f_t^{(\kappa)})'(iy)$ in the corners of the boxes. Using Distortion Theorems for the conformal maps along with the following Lemma that appears in \cite{viklund2014continuity}.

\begin{lemma}[Lemma $2.3$ of \cite{viklund2014continuity}]\label{Lemvik}
Let $0<T<\infty$. Suppose that for $t \in [0,T]$, $f_t^{(1)}$ and $f_t^{(2)}$ satisfy the backward Loewner differential equation with drivers $W_t^{(1)}$ and $W_t^{(2)}\,.$ Suppose that $\epsilon=sup_{s \in [0,T]}|W_s^{(1)}-W_s^{(2)}|$.
For $u=x+iy \in \mathbb{H}$, for every $t \in [0, T]$ we have that  
$$ |f^{(1)}_t(u)-f^{(2)}_t(u)| \leq \epsilon \exp \left[\frac{1}{2}\left[ \log \frac{I_{t, y}|(f^{(1)}_{t})^{'}(u)|}{y}\log\frac{I_{t, y}|(f^{(2)}_{t})^{'}(u)|}{y}\right]^{1/2} +\log\log \frac{I_{t, y}}{y}\right]\,,$$
where $I_{t,y}=\sqrt{4t+y^2}\,.$
\end{lemma}

We use this estimate in our analysis. Namely, we take the following approach. We fix parameters $\kappa_1$ and $\kappa$ in $[\kappa_1, \kappa_M]$. Thus, in this manner here we fixed a coupling given by the choice of the initial measure $\mathbb{P}_0$ on the path space and by the relation \eqref{eq4.4.}, i.e. for each measure $\mathbb{P}_{\kappa}$ we couple to the Loewner chains with the drivers $\sqrt{\kappa_1}W_t^{\mathbb P_{\kappa}}$ and $\sqrt{\kappa}W_t^{\mathbb P_{\kappa}}$.

 Then, we can consider $W_t^{(1)}-W_t^{(2)}=\sqrt{\kappa_1}W_t^{\mathbb{P}_{\kappa}}-\sqrt{\kappa}W_t^{\mathbb{P}_{\kappa}}$. Using L\' evy's characterization of the Brownian motion, we have that under any measure $\mathbb{P}^{\kappa}$, the process $\sqrt{\kappa_1}W_t^{\mathbb{P}_{\kappa}}$ is a Brownian motion multiplied with the diffusivity constant $\sqrt{\kappa_1}$ (indeed since for any measure $\mathbb{P}_{\kappa}$, the process $\sqrt{\kappa_1}W_t^{\mathbb{P}_\kappa}$ is a local martingale with the quadratic variation $\kappa_1t$ ).
We have then that for fixed $\kappa_1$ the difference $|f^{(1)}_t(u)-f^{(2)}_t(u)|$ is a function of $\kappa$. 

Thus, we can estimate the difference using the above Lemma and the quasi-sure estimates on the derivatives of the maps $f_t(z)$, i.e. for any  $\mathbb{P}_{\kappa}$ (obtained when choosing the drivers $\sqrt{\kappa_1}W_t^{\mathbb{P}_{\kappa}}$ and $\sqrt{\kappa}W_t^{\mathbb{P}_{\kappa}}$). 
Furthermore, we have the following remark.
\begin{remark}\label{thetaremark}
Let $\beta \in \left(\frac{2}{2b_0-\kappa r_0/2},1\right)$, with $b_0=\frac{\kappa((1+4/\kappa)r_0-r_0^2)}{2} $ and $r_0=\frac{1}{4}+\frac{2}{\kappa}$ (as in the proof in the previous section). Then, it can be shown that (see Section $5.5$ in \cite{kemppainen2017schramm}) for any fixed $\kappa \neq 8$, we have that  $$\mathbb{P}(|h'_t(i2^{-j})| \geq 2^{n\beta}) \leq c2^{-(2+\epsilon_1)n},$$ where $c$ is a constant that depends on $\kappa$. 
\end{remark}

Thus, we can choose $\beta \in \left(\frac{2}{2b_0-\kappa r_0/2},1\right)$ in order to bound the derivatives of the conformal maps. In order to simplify the analysis we bound the first derivative term using Remark \ref{thetaremark} for fixed $\kappa_1$, i.e. we have $|f_t'(u)| \leq cy^{-\beta}$ $\mathbb{P}_{\kappa}$-$\text{a.s.}$, for any $\kappa \in [\kappa_m, \kappa_M]$ for $\beta \in \left(\frac{2}{2b_0-\kappa r_0/2},1\right)$.  For the other derivative term since the conformal maps are normalized at infinity there exists a constant $c<\infty$ depending only on $T$ such that $|f_t'(z)| \leq c(y^{-1}+1)$ for all $z \in \mathbb{H}$ and all $t \in [0, T]$.

 Then the estimate reads for any choice of the measure $\mathbb{P}_{\kappa}$, on complex numbers $u$ such that their imaginary parts are elements of the dyadic partition of $[0,1]$ (in order to use the estimate \eqref{finalest})

$$|f^{(1)}_t(u)-f^{(2)}_t(u) | \leq c_3 \epsilon_2(\kappa_1, \kappa) y^{-\sqrt{\frac{1+\beta}{2}}}$$
$\mathbb{P}_{\kappa}$-a.s., with $\epsilon_2(\kappa_1, \kappa)$ a function of $\kappa_1$ and $\kappa$ that tends to $0$ as $\kappa \to\kappa_1$.


\color{black}

%

Let us consider $q>0$ and $$S_{n,j,k}(q)=\left[\frac{j-1}{2^{2n}}, \frac{j}{2^{2n}} \right] \times \left[\frac{1}{2^{n}}, \frac{1}{2^{n-1}} \right] \times \left[\frac{k-1}{2^{qn}}, \frac{k}{2^{nq}} \right], $$and let
$$ p_{n,j,k}=\left(\frac{j}{2^{2n}}, \frac{1}{2^n}, \frac{k}{2^{qn}} \right) \in S_{j,n,k},$$
be the corners of the boxes. In the following, we choose $q>0$ and estimate the derivative of the the Loewner maps in corners of the boxes, as in \cite{viklund2014continuity}. Comparaed with the analysis in in \cite{viklund2014continuity}, one important aspect is that as  we change the parameter $\kappa$ (and implicitly go along the $\kappa$ axis in the Whitney boxes) we also change the measures $\mathbb{P}_{\kappa}$. In \cite{viklund2014continuity}, the typical estimate on the derivative of the map on the corners of the boxes is combined with the application of the Borel-Cantelli Lemma in order to assure the analysis on a unique nullset of the Brownian motion driving the Loewner differential equation. The use of Borel-Cantelli in this approach restricts the applicability of the derivative estimate in the corners of the boxes for the values $\kappa<2.1$ (more recently up to $\kappa < 8/3$ with new estimates in \cite{friz2019regularity}).  The novelty is that we use the polar set outside of which the aggregated solution is defined and then we can vary $\kappa \in \mathcal{K}\cap \mathbb{R}_+ \setminus ([0, \epsilon) \cup \{8\})$.  In this way, we argue that the estimates on the derivative of the maps in the corners $p_{n,j,k}$ of the Whitney boxes hold q.s. In this new setting, we avoid the restriction to the interval $\kappa\in [0, 8(2-\sqrt{3}))$, since the estimate on the derivative used in the proof of Theorem \ref{Rohdeschramm} can be used simultaneously for a family of probability measures $\mathbb{P}_{\kappa}$ for $\kappa \in \mathcal{K}\cap \mathbb{R}_+ \setminus ([0, \epsilon) \cup \{8\})$.

We give the following version of Lemma $3.3$ in \cite{viklund2014continuity}, that does not contain the restriction on the $\kappa$ interval, due to the application of the Borel-Cantelli Lemma. 

\begin{lemma}\label{corner}
Let $\epsilon>0$. Let $\kappa \in [\kappa_m, \kappa_M]=\mathcal{K}\cap \mathbb{R}_+ \setminus ([0, \epsilon) \cup \{8\})$, then q.s. there exists a random constant $c=c(\epsilon, \beta, q, \omega)<+\infty$ such that 
$|F'(p_{n,j,k})|\leq c2^{n\beta}$ for all pairs $(n, j, k) \in \mathbb{N}^3$ such that $p_{n,j,k} \in [0,1]\times[0,1]\times[\kappa_m,\kappa_M]\,.$
\end{lemma}
\begin{proof}


Using the analysis from the previous sections, we have that 
\begin{equation}\label{ser1}
\sum_{j=1}^{2^{2n}}\mathbb{P}_{\kappa}\left[|F'(p_{n,j,k})|\geq 2^{n\beta} \right] \leq c2^{-n\sigma},
\end{equation}
where the parameter $\sigma$ depends on $\kappa$. Following the analysis on the previous section, the parameter $\sigma=\sigma(\kappa)$ is such that the previous series is summable for every $ \mathbb{P}_{\kappa}$ for $\kappa \in [ \kappa_m, \kappa_M ]$. 
Following the analysis in the previous section one can show that the constant $c(\kappa)$ can be controlled in $\kappa$ in $[\kappa_m, \kappa_M]$ (see the analysis of the parameters in the previous section as well as the optimization procedure in Section $5.5$ in \cite{kemppainen2017schramm}). Thus, one can take the supremum of this constant in the interval of $\kappa$ that one considers.  
According to the analysis in the previous section, we have that the series \ref{ser1} is convergent  for all $\kappa \in \mathcal{K}\cap \mathbb{R}_+ \setminus ([0, \epsilon) \cup \{8\})$, i.e.
\begin{equation}\label{ser}
\sum_{j=1}^{2^{2n}}\sup_{\kappa}\mathbb{P}_{\kappa}\left[|F'(p_{n,j,k})|\geq 2^{n\beta} \right]
\end{equation}
is convergent for any choice of measure $\mathbb{P}_{\kappa}$ with $\kappa \in[\kappa_m, \kappa_M]=\kappa \in \mathcal{K}\cap \mathbb{R}_+ \setminus ([0, \epsilon) \cup \{8\})$.

\end{proof}

The next step is to use Distortion Theorem along with Lemma \ref{Lemvik} in order to push the estimate on the derivative from the corners of the box to all the points inside. The result is captured in the following Lemma. We emphasize that in \cite{viklund2014continuity}, there are two parts of the analysis in order to obtain this estimate for fixed $\kappa$, i.e. the analysis is split into the cases $\kappa$ near $0$ and the complementary regime. 
In our setting, we discuss only the case $\kappa> \epsilon$ for any $\epsilon>0$ since the a.s. continuity in $\kappa$ of the traces in the regime $\kappa \in [0, 8(2-\sqrt{3}))$ was proved in \cite{viklund2014continuity} already. 

\begin{lemma}
 Let $\kappa \in [\kappa_m, \kappa_M] = \mathcal{K} \cap \mathbb{R}_+ \setminus ([0, \epsilon) \cup \{8\})$, then for every $\epsilon>0$ there exists $\delta>0$ and $q>0$ and a random constant $c=c(q,\epsilon, \omega, \beta)<\infty$ such that
$diam(F(S_{n,j,k}))\leq c2^{-n\delta}\,, $ quasi-surely  for all $(n,j,k) \in \mathbb{N}^3$ with $p_{n,j,k}\in [0,1]\times [0,1]\times [\kappa_m,\kappa_M]\,.$
\end{lemma}
\begin{proof}
We will show that there exists $\delta>0$ such that 
$|F(p)-F(p_{n,j,k})| \leq cn2^{-n\delta}.$
Let us fix $\kappa_1 \in [\kappa_m, \kappa_M]$.
We estimate for $|\Delta t|\leq y^2$, using Lemmas \ref{corner} and \ref{dist}
$$|F(t+\Delta t, y, \kappa_1)-F(t, y, \kappa_1)| \leq cy|F'(p_{n,j,k})| \leq c'2^{-n(1-\beta)}$$ quasi surely with $c'=c'(\beta, q, \omega)$.

By Koebe Distortion Theorem and Lemmas \ref{corner} and \ref{dist}, we obtain that  $$|F(t+\Delta t, y+\Delta y, \kappa_1)-F(t+\Delta t, y, \kappa_1)| \leq cy|F'(p_{n,j,k})| \leq c2^{-n(1-\beta)}, $$ quasi-surely. 

Let $\phi(\beta)=\sqrt{\frac{1+\beta}{2}}$. Using Lemma \ref{Lemvik} and estimating for $\kappa=\kappa_1+\Delta \kappa$, $$\sup_{t\in [0,1]}|\sqrt{\kappa_1+\Delta\kappa }W_t^{\mathbb{P}_{\kappa}}-\sqrt{\kappa_1}W_t^{\mathbb{P}_{\kappa}}| \leq c\Delta\kappa \sup_{t\in [0,1]}|W_t^{\mathbb{P}_{\kappa}}|\leq c'\Delta \kappa\,,$$
where $c'=c'(\omega, \epsilon)<\infty,$ quasi-surely (i.e. $\mathbb{P}^{\kappa}$-a.s. for all $\kappa \in [\kappa_m, \kappa_M]$), 
we obtain that
$$|F(t+\Delta t, y+\Delta y, \kappa_1+\Delta \kappa )-F(t+\Delta t, y+\Delta y, \kappa_1)| \leq c \Delta \kappa y^{-\phi(\beta)}\log (y^{-1})\leq cn2^{-n(q-\phi(\beta))}\,,$$ quasi-surely.
We choose $\delta=\min \{1-\beta, q-\phi(\beta)\}$ that is clearly positive for the right choice $q > \phi(\beta)$, and we finish the proof.

\end{proof}

In order to finish the proof of Theorem \ref{qscontinuity}, we redo the exact elements of Theorem $4.1$ in  \cite{viklund2014continuity} under all the measures $\mathbb{P}_{\kappa}$, in our coupling. We consider the family of measures $\mathbb{P}_{\kappa}$ for $\kappa \in [\kappa_m, \kappa_M]$.
In order to achieve it we estimate under all the measures $\mathbb{P}_{\kappa}$ for $\kappa \in [\kappa_m, \kappa_M]$ using the previous lemma (i.e. the bound on the diameters of the Whitney boxes) in the following manner $|F(t, y, \kappa_1)-F(t, y , \kappa)| \leq |F(t, y, \kappa_1)-F(t, 2^{-N}, \kappa_1)|+|F(t, 2^{-N}, \kappa_1)-F(t, 2^{-N}, \kappa)|+|F(t, 2^{-N}, \kappa)-F(t, y, \kappa)| \leq C\sum_{n=N}^{\infty}2^{-n\delta}$ as we vary the parameter $\kappa \in [\kappa_m, \kappa_M]$.

When comparing the fixed value $\kappa_1$ and any other $\kappa_2 \in [\kappa_m, \kappa_M]$, (w.l.o.g $\kappa_2 > \kappa_1)$ we obtain that $|F(t, y, \kappa_1)-F(t, y , \kappa_2)| \leq |F(t, y, \kappa_1)-F(t, 2^{-N}, \kappa_1)|+|F(t, 2^{-N}, \kappa_1)-F(t, 2^{-N}, \kappa_2)|+|F(t, 2^{-N}, \kappa_2)-F(t, y, \kappa_2)| \leq C\sum_{n=N}^{\infty}2^{-n\delta} \leq C2^{-N\delta} \leq C|\kappa_1-\kappa_2|^{\delta/q},$ where we have used the stopping time $N=O(-\log|\kappa_1-\kappa_2|^{1/q})$ given by the bounds $2^{-qN} <|\kappa_1-\kappa_2| \leq 2^{-q(N-1)}.$

Then, for any choice $\kappa_2 \in [\kappa_m, \kappa_M]$, when taking $ y \to 0+$ we get 
$$|\gamma^{\kappa_1}_{\kappa_2}(t)-\gamma^{\kappa_2}(t)| \leq C|\kappa_1-\kappa_2|^{\delta/q}$$ that holds $\mathbb{P}_{\kappa_2}$ -a.s.


Let us choose without lose of generality $\kappa_m=\kappa_1$, then
one can then obtain a uniform estimate for all the family of measures $\mathbb{P}_{\kappa}$ for $\kappa \in [\kappa_m, \kappa_M]$
$$|\gamma^{\kappa_m}_{l}(t)-\gamma^{l}(t)| \leq C|\kappa_m-\kappa_M|^{\delta/q}.$$ 

This estimate holds $\mathbb{P}_{\kappa}$ a.s. for all $\kappa \in [\kappa_m, \kappa_M]$, i.e. it holds quasi surely for a family of probability measures indexed by $\kappa \in [\kappa_m, \kappa_M]$. 
Thus, outside a polar set that depends on the choice of the nontrivial compact interval $[\kappa_m, \kappa_M]$, we obtain the desired result.
Taking, $\kappa_M \to \kappa_m$, we obtain the desired result.

\end{proof}


\newpage 

\bibliographystyle{plain}
\bibliography{literature.bib}

\end{document}